\numberwithin{equation}{section}
\newtheorem{Thm}[equation]{Theorem}
\newtheorem*{Thm*}{Theorem}
\newtheorem{Prop}[equation]{Proposition}
\newtheorem{Lem}[equation]{Lemma}
\newtheorem{Cor}[equation]{Corollary}
\theoremstyle{remark}
\newtheorem{Def}[equation]{Definition}
\newtheorem{Ter}[equation]{Terminology}
\newtheorem{Not}[equation]{Notation}
\newtheorem{Exa}[equation]{Example}
\newtheorem{Ack}[equation]{Acknowledgements}
\newtheorem{Rem}[equation]{Remark}
\newtheorem{Rec}[equation]{Recollection}
\newcommand{\nc}{\newcommand}
\nc{\dmo}{\DeclareMathOperator}
\nc{\Beren}[1]{{\color{MidnightBlue}#1}}
\nc{\Bout}[1]{\Beren{\sout{#1}}}
\nc{\Paul}[1]{{\color{ForestGreen}#1}}
\nc{\Pout}[1]{\Beren{\sout{#1}}}
\dmo{\cone}{cone}
\dmo{\Der}{D}
\dmo{\hocolim}{hocolim}
\dmo{\holim}{holim}
\dmo{\Hom}{Hom}
\dmo{\id}{id}
\dmo{\Id}{Id}
\dmo{\incl}{incl}
\dmo{\Ker}{Ker}
\dmo{\kos}{kos}
\dmo{\Loc}{Loc}
\dmo{\Mat}{M}
\dmo{\OO}{\mathcal{O}}
\dmo{\opname}{op}
\dmo{\proj}{proj}
\dmo{\rmH}{H}
\dmo{\Spc}{Spc}
\dmo{\Spec}{Spec}
\dmo{\supp}{supp}
\dmo{\thick}{thick}
\nc{\adj}{\dashv}
\nc{\bbN}{\mathbb{N}}
\nc{\bbQ}{\mathbb{Q}}
\nc{\bbZ}{\mathbb{Z}}
\nc{\cat}[1]{\mathscr{#1}}
\nc{\cS}{\cat{S}}
\nc{\cT}{\cat{T}}
\nc{\cU}{\cat{U}}
\nc{\Dperf}{\Der^{\mathrm{perf}}}
\nc{\DYR}{\Der_Y\hspace{-0.15ex}(R)}
\nc{\ee}{\mathbb{e}}
\nc{\eg}{{\sl e.\,g.}}
\nc{\eY}{\ee_Y}
\nc{\ff}{\mathbb{f}}
\nc{\fY}{\ff_Y}
\nc{\gm}{\mathfrak{m}}
\nc{\Homcat}[1]{\Hom_{\cat #1}}
\nc{\hook}{\hookrightarrow}
\nc{\hTY}{{\cT}^{\wedge}_Y}
\nc{\ideal}[1]{\langle #1\rangle}
\nc{\ie}{{\sl i.e.}, }
\nc{\In}{I^{(n)}}
\nc{\Inm}{I^{(n-1)}}
\nc{\into}{\mathop{\rightarrowtail}}
\nc{\inv}{^{-1}}
\nc{\ion}{\iota_{n}}
\nc{\ionm}{\iota_{n-1}}
\nc{\isoto}{\mathop{\overset{\sim}\to}}
\nc{\kn}{k^{(n)}}
\nc{\knm}{k^{(n-1)}}
\nc{\knp}{k^{(n+1)}}
\nc{\Lotimes}{\otimes}
\nc{\Mid}{\,\big|\,}
\nc{\onto}{\mathop{\twoheadrightarrow}}
\nc{\op}{^{\opname}}
\nc{\potimes}[1]{^{\otimes #1}}
\nc{\quadtext}[1]{\quad\textrm{#1}\quad}
\nc{\restr}[1]{_{|_{\scriptstyle #1}}}
\nc{\sbull}{{\scriptscriptstyle\bullet}}
\nc{\SET}[2]{\big\{\,#1\Mid#2\,\big\}}
\nc{\sminus}{\!\smallsetminus\!}
\nc{\SpcS}{\Spc(\cat S^c)}
\nc{\SpcT}{\Spc(\cat T^c)}
\nc{\SYp}{\cS^{\wedge}_{Y'}}
\nc{\then}{\Rightarrow}
\nc{\unit}{\mathbb{1}}
\nc{\unitS}{\unit_{\cat S}}
\nc{\xto}[1]{\xrightarrow{#1}}
\nc{\Ycomplete}{$Y$\hspace{-0.4ex}-complete}
\nc{\Ycompletion}{$Y$\hspace{-0.4ex}-completion}
\nc{\Ytorsion}{$Y$\hspace{-0.4ex}-torsion}
\renewcommand{\ln}{\ell^{(n)}}
\Crefname{Thm}{Theorem}{Theorems}
\Crefname{Prop}{Proposition}{Propositions}
\Crefname{thmx}{Theorem}{Theorems}
\title[Perfect complexes and completion]{Perfect complexes and completion}
\author[P.\ Balmer]{Paul Balmer}
\address{UCLA Mathematics Department, Los Angeles, CA 90095-1555, USA}
\email{balmer@math.ucla.edu}
\author[B.\ Sanders]{Beren Sanders}
\address{Mathematics Department, UC Santa Cruz, 95064 CA, USA}
\email{beren@ucsc.edu}
\urladdr{http://people.ucsc.edu/$\sim$beren/}
\date{2024 November 21}
\subjclass[2020]{18F99}
\keywords{Compact, dualizable, Koszul complex, derived complete complex}
\thanks{The first author was supported by NSF grant DMS-215375.}
\begin{document}


\maketitle

\begin{abstract}
\vskip-\baselineskip
\vskip-\baselineskip
Let~$\hat{R}_{}$ be the $I$-adic completion of a commutative ring~$R$ with respect to a finitely generated ideal~$I$.
We give a necessary and sufficient criterion for the category of perfect complexes over~$\hat{R}_{}$ to be equivalent to the subcategory of dualizable objects in the derived category of $I$-complete complexes of~$R$-modules.
Our criterion is always satisfied when $R$ is noetherian.
When specialized to~$R$ local and noetherian and to~$I$ the maximal ideal, our theorem recovers a recent result of Benson, Iyengar, Krause and Pevtsova.
\end{abstract}

{
\hypersetup{linkcolor=black}
\vskip\baselineskip\vskip\baselineskip
\tableofcontents
}

\vskip-\baselineskip
\vskip-\baselineskip
\vskip-\baselineskip
\section{Introduction}


Let $R$ be a commutative ring
and let $I\subset R$ be a finitely generated ideal.
We write $Y\coloneqq V(I)\subseteq \Spec(R)$ for the corresponding closed subset with quasi-compact complement.
It is well-known that the $I$-adic completion~$\smash{\hat{R}_I}\coloneqq\lim_{n}R/I^n$ only depends on~$Y$. We sometimes write~$\smash{\hat{R}_Y}$, or just~$\smash{\hat{R}}$, for this ring.

It is legitimate to ask whether the derived category~$\Der(\hat{R}_{{}})$ of the completion can be recovered from the derived category of~$R$ in purely tensor-triangular terms.
The question is tantalizing since the term `completion' also refers to a standard construction in stable homotopy theory.
Let us remind the reader.

Suppose that $\cT$ is a `big' tensor-triangulated category.
The exact hypothesis will play an interesting role, so we give some details.
We want the category $\cT$ to be compactly generated in the sense of Neeman~\cite{Neeman01} and to admit a good notion of `small' object, namely, we want the compact objects~$\cT^c$ and the dualizable objects~$\cT^d$ to coincide: $\cT^c=\cT^d$.
There is a profusion of such `big' tt-categories in mathematics, see~\cite{BalmerFavi11}
or~\cite{HoveyPalmieriStrickland97}.
The derived category~$\cT=\Der(R)$ is an example; its small objects $\cT^c=\cT^d=\smash{\Dperf(R)}$ are precisely the \emph{perfect complexes}, \ie the bounded complexes of finitely generated projective $R$-modules.

For any `big' tt-category~$\cT$, consider the spectrum $\SpcT$ of the small part~$\cT^c$.
Choose a closed subset $Y\subseteq \Spc(\cT^c)$ with quasi-compact complement and let $\cT^c_Y=\SET{c\in\cT^c}{\supp(c)\subseteq Y}$ be the tt-ideal of small objects supported on~$Y$. In the literature (see Greenlees~\cite[\S\,2]{Greenlees01}), the double right-orthogonal of~$\cT^c_Y$ in~$\cT$
\begin{equation}
\label{eq:TcY-perp-perp}%
\hTY:=((\cT^c_Y)^{\perp})^{\perp}
\end{equation}
is called the subcategory of \emph{{\Ycomplete} objects} in~$\cT$.
See more in~\Cref{Rem:TY=hTY}.
The tt-category~$\hTY$ comes with a tt-functor called \emph{completion}
\begin{equation}
\label{eq:compl}%
(-)^\wedge_Y\colon \ \cT\to \hTY
\end{equation}
which is simply a localization, left adjoint to the inclusion~$\hTY\hookrightarrow\cT$.

Unfortunately, if we plug $\cT:=\Der(R)$ into this abstract theory, using the identification $\SpcT\cong\Spec(R)$, it essentially never happens that $\hTY$ is the derived category of~$\hat{R}_{Y}$, except in the trivial case where the ring splits~$R\simeq R_1\times R_2$ and~$I=R_2$ (so $\hat{R}_{I}=R_1$).
It even fails for the $p$-adics: $\Der(\bbZ)^\wedge_{(p)}\not\simeq\Der(\hat\bbZ_p)$.
So `derived category' and `completion' do not commute.
And it gets worse. Except in the split case, $\hTY$ is not the derived category of \emph{any} commutative ring whatsoever.

The reason for this is well-known: The tt-category~$\hTY$ is not a legitimate `big' tt-category.
It is generated by compact-dualizable objects but its $\otimes$-unit, although obviously dualizable, is only compact in split cases (\Cref{Rem:trivial-tt}).
In other words, for every `big' tt-category~$\cT$ and every~$Y$
the inclusion
\begin{equation}
\label{eq:TY-cd}%
(\hTY)^c \hook (\hTY)^d
\end{equation}
is typically a proper inclusion.
This distinction, between \emph{compact} and \emph{dualizable}, only makes sense in \emph{tensor}-triangular geometry and is difficult to extract from the mere triangulated structure.
When the two categories in~\eqref{eq:TY-cd} are different, the larger one is better. Indeed, only~$(\hTY)^d$ is an actual tt-category and only~$(\hTY)^d$ receives the original~${\cT^c=\cT^d}$ under the completion tt-functor~$(-)^\wedge_Y$ in~\eqref{eq:compl}.

A significant observation of Benson--Iyengar--Krause--Pevtsova~\cite[\S\,4]{BensonIyengarKrausePevtsova23} is that although we do not recover~$\Der(\hat{R}_{Y})$ as the {\Ycompletion} of~$\cT=\Der(R)$, we can still hope to recover its small objects.
They prove that when $(R,\gm)$ is a local and noetherian ring and $Y=\{\gm\}$, the category~$(\hTY)^d$, whose praise we sang above, recovers the derived category of \emph{perfect} complexes over the $\gm$-adic completion~$\smash{\hat{R}_{\gm}}$.
Interestingly, BIKP did not conjecture that their result would hold for any commutative ring~$R$ and any finitely generated ideal~$I$.
And indeed it fails in that generality, as we shall see in \Cref{Rem:no}.
On the other hand, we shall prove that the noetherian result is not restricted to local rings but is rather a global fact:
\begin{Thm}[{\Cref{Cor:main-noeth}}]
\label{Thm:main-noeth-intro}%
Suppose that $R$ is noetherian. Then there is a canonical equivalence of tt-categories between the derived category~$\Dperf(\hat{R}_{Y})$ of perfect complexes over the completion and the category~$(\Der(R)^{\wedge}_Y)^d$ of dualizable objects in the tt-category of {\Ycomplete} complexes of~$R$-modules, which makes the following diagram commute
\begin{equation}
\label{eq:equivalence-comp}%
\vcenter{
\xymatrix@R=.9em@C=1em{
&& \Dperf(R) \ar@/_1em/[ld]_-{(-)^\wedge_Y} \ar@/^1em/[rd]^-{\hat{R}_{{}}\otimes_R-}
\\
& (\Der(R)^{\wedge}_Y)^d \ar[rr]^-{\cong}
&& \Dperf(\hat{R}_{Y}).
}}
\end{equation}
\end{Thm}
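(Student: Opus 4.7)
My plan is to construct a tt-functor $\Phi\colon \Dperf(\hat{R}_Y) \to (\Der(R)^{\wedge}_Y)^d$ that fits into~\eqref{eq:equivalence-comp}, and then verify it is an equivalence. Abbreviate $\cT \coloneqq \Der(R)$. The decisive input from the noetherian hypothesis is the Greenlees--May identification of the unit, $\unit^{\wedge}_Y \cong \hat{R}_Y$ in $\hTY$, valid precisely because $R$ is noetherian and $I$ finitely generated. Consequently, for any $P \in \Dperf(R)$ one has $P^{\wedge}_Y \cong \hat{R}_Y \otimes_R^L P$, because completion commutes with tensoring by a perfect complex. This lets me factor the restriction of $(-)^{\wedge}_Y$ to perfect complexes as $\Dperf(R) \xrightarrow{\,\hat{R}_Y \otimes_R^L -\,} \Dperf(\hat{R}_Y) \xrightarrow{\,\Phi\,} (\hTY)^d$, defining $\Phi$ and making~\eqref{eq:equivalence-comp} commute by construction; $\Phi$ lands in $(\hTY)^d$ since tt-functors preserve dualizable objects.

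Full faithfulness of $\Phi$ is then a formal consequence of noetherianity: $\hat{R}_Y$ is noetherian, so $\Dperf(\hat{R}_Y) = \thick_{\Der(\hat{R}_Y)}(\hat{R}_Y)$, and $\Phi$ sends this $\otimes$-generator to $\unit_{\hTY}$ while inducing the identity on endomorphism rings $\hat{R}_Y \to \hat{R}_Y$. A standard five-lemma argument propagates the $\Hom$-isomorphism from the generator through the thick closure, and then to arbitrary shifts and cones.

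The main obstacle, as I see it, is essential surjectivity: $(\hTY)^d \subseteq \thick_{\hTY}(\unit)$. The introduction already warns that $(\hTY)^c \subsetneq (\hTY)^d$ in general, since $\unit$ is typically not compact in $\hTY$, so this is a genuine rigidity statement about dualizables and \emph{not} a compactness argument. My plan is to use the Koszul complex $K \coloneqq \kos(a_1,\ldots,a_n)$ on generators of $I$: as an element of $\cT^c_Y$ it tensor-generates the tt-ideal $\cT^c_Y$, and its completion $K^{\wedge}_Y$ is perfect over the noetherian ring $\hat{R}_Y$, hence already lies in $\thick_{\hTY}(\unit)$ via $\Phi$. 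For a general $X \in (\hTY)^d$, I would combine the coevaluation $\unit \to X \otimes X^{\vee}$ and evaluation $X \otimes X^{\vee} \to \unit$ with a Koszul-style finite expression of $\unit$ built from $K^{\wedge}_Y$ to exhibit $X$ as a retract of a finite extension of shifts of $\unit$. Formalizing this reduction is where I expect the real difficulty to lie; it is precisely where the noetherian hypothesis should do its work, supplying through the finite generation of $I^n/I^{n+1}$ and Artin--Rees the finiteness control needed to make the retract argument go through.
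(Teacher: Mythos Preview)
Your overall architecture---build a comparison functor, check full faithfulness via a thick-subcategory/five-lemma argument, then tackle essential surjectivity---is sound, and the full-faithfulness half is essentially correct. Two remarks, one minor and one substantial.

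\textbf{Minor.} Your definition of $\Phi$ is incomplete: the factorization $\Dperf(R)\to\Dperf(\hat R_Y)\xrightarrow{\Phi}(\hTY)^d$ does not \emph{define} $\Phi$, since $\hat R_Y\otimes_R-$ is not essentially surjective onto $\Dperf(\hat R_Y)$. The paper sidesteps this by going in the opposite direction: it uses the completed extension-of-scalars $\hat f^*\colon\hTY\to\Der(\hat R_Y)^\wedge_{Y'}$, which is manifestly a tt-functor, and restricts to dualizables. Your Greenlees--May input $\hat\unit_Y\cong\hat R_Y$ is exactly what the paper packages as \emph{Koszul-completeness} (\Cref{prop:noeth-koszul}, \Cref{Thm:D(R)-completion}), and it yields that $\hat f^*$ is an \emph{equivalence} on all of $\hTY$, not merely full faithfulness on a generator.

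\textbf{Substantial.} The essential-surjectivity sketch does not work, and this is where the real content lies. You propose to express $\unit_{\hTY}$ as a finite build from $K^\wedge_Y$ and then use (co)evaluation to retract $X$ off something finite. But $\unit_{\hTY}$ is \emph{not} in $\thick(K^\wedge_Y)$: the Koszul object is compact in $\hTY$ while $\unit_{\hTY}$ is not (outside the split case), so no such finite expression exists. Dually, knowing that $K\otimes X$ is compact does not give $X\in\thick(K\otimes X)$. Artin--Rees and finite generation of $I^n/I^{n+1}$ do not by themselves bridge this gap.

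The paper's route is first to transport the problem to $\hat R_Y$ via the equivalence above, so that one is asking: over a classically complete ring $S=\hat R_Y$, is every dualizable in $\Der(S)^\wedge_{Y'}$ perfect? This is \Cref{Thm:dualizable-complete}, and its proof is not a retract trick but a descending induction on the homological amplitude of $\iota_1^*(d)=(S/I)\otimes^L d$. One shows $\iota_n^*(d)$ is perfect over $S/\In$ for all~$n$ (because $\iota_n^*$ restricted to $\cT_Y$ is a genuine tt-functor, \Cref{Lem:i*e}); a Mittag--Leffler/holim argument (\Cref{Lem:3}) shows that $\rmH_0(\iota_1^*d)=0$ forces $d=0$; then one lifts the bottom projective of a minimal model of $\iota_1^*(d)$ to a finitely generated projective $P$ over $S$ via idempotent lifting along $S\onto S/I$ (\Cref{Prop:lift-proj}), and forms a triangle $d'\to\eY\otimes P\to d\to\Sigma d'$ whose image under $\iota_1^*$ has strictly smaller amplitude. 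Completeness of $S$ enters precisely through this idempotent-lifting step---not through Artin--Rees---and this inductive construction is the idea missing from your proposal.
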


This statement is an easy consequence of our results concerning possibly non-noetherian rings.
In that generality, the above formulation fails, as already mentioned, and we need to add a new condition.
We want to express that condition in `concrete' terms and will do so by means of Koszul complexes.

But before wheeling in Koszul complexes, let us point out another problem that we shall face in the non-noetherian setting:
The two notions of completion discussed so far can yield different ring objects, that is, the tt-completion~$\hat{\unit}_{Y}$ and the classical completion~$\hat{R}_{Y}$ might not be isomorphic in~$\cT=\Der(R)$.
We shall see that this problem has the `same' solution as the first one.
This second result can even be formulated at the level of abstract `big' tt-categories, not just for $\Der(R)$; see \Cref{Thm:tt-compl}.

So let us now come to our criterion.
For a sequence $\underline{s}=(s_1,\ldots,s_r)$ of generators of the ideal~$I$, consider the
usual Koszul complex~$\kos_R(\underline{s})=\otimes_{i=1}^{r}\cone(s_i\colon \unit\to \unit)$.
In degree zero, the homology of this complex is simply $R/I$ and therefore does not change if we replace~$R$ by~$\hat{R}_{}$.
We say that the sequence~$\underline{s}$ is \emph{Koszul-complete} if this holds not only in degree zero but in all degrees:
$\rmH_\sbull(\kos_R(\underline{s}))\cong\rmH_\sbull(\kos_{\hat{R}_{}}(\underline{s}))$; see~\Cref{Def:I-kos}.
This condition always holds when~$R$ is noetherian, as we verify in~\Cref{prop:noeth-koszul}.
We can now state our main result.
\begin{Thm}[{\Cref{Thm:main}}]
\label{Thm:main-intro}%
Let $R$ be a commutative ring and let $\underline{s}=(s_1,\ldots,s_r)$ be a sequence of elements with $Y\coloneqq V(s_1,\ldots,s_r)$.
Then the following are equivalent:
\begin{enumerate}[\rm(i)]
\item
The sequence~$\underline{s}$ is Koszul-complete: $\rmH_\sbull(\kos_R(\underline{s}))\cong\rmH_\sbull(\kos_{\hat{R}_{}}(\underline{s}))$.
\smallbreak
\item
There is an equivalence of tt-categories between $\Dperf(\hat{R}_{Y})$ and $(\Der(R)^\wedge_Y)^d$ which makes the diagram in~\eqref{eq:equivalence-comp} commute.
\smallbreak
\item
There exists an isomorphism of ring objects~$\hat{\unit}_Y\cong\hat{R}_{Y}$ in~$\Der(R)$. In other words, tt-completion of the unit along~$Y$ recovers classical ring completion.
\end{enumerate}
\end{Thm}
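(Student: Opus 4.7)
The plan is to derive the three-way equivalence from the abstract \Cref{Thm:tt-compl} (which handles the passage between $\hat{\unit}_Y$ and $(\Der(R)^\wedge_Y)^d$ for an arbitrary big tt-category) together with a concrete Koszul-complex analysis of the comparison between the tt-completion $\hat{\unit}_Y$ and the classical completion $\hat{R}_{Y}$. Throughout we work in $\cT=\Der(R)$, identifying $\SpcT\cong\Spec(R)$ and $Y=V(\underline{s})$.

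For (ii)$\Leftrightarrow$(iii): the abstract theorem provides a canonical tt-equivalence between a ``perfect modules'' category over the ring object $\hat{\unit}_Y\in\Der(R)$ and $(\Der(R)^\wedge_Y)^d$, compatible with the horizontal functors out of $\Dperf(R)=\cT^c$. Under~(iii), $\hat{\unit}_Y\cong\hat{R}_{Y}$ as ring objects of $\Der(R)$, so this abstract equivalence translates verbatim into the one asserted in~(ii). Conversely, any tt-equivalence as in~(ii) must identify the $\otimes$-units, which on the two sides are $\hat{R}_{Y}$ and $\hat{\unit}_Y$ respectively, and commutativity of~\eqref{eq:equivalence-comp} on $R\in\Dperf(R)$ forces this identification to respect the ring structures, yielding~(iii).

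The equivalence (i)$\Leftrightarrow$(iii) is the concrete crux. Both conditions are controlled by the Koszul tower $\{\kos_R(\underline{s}^n)\}_n$ of compact generators of the $Y$-torsion part of $\cT^c$. On the one hand, $\hat{\unit}_Y$ admits a standard model as a homotopy limit $\holim_n\kos_R(\underline{s}^n)$ (via the duality/\v{C}ech description of the localisation onto $Y$-complete objects), while the classical completion $\hat{R}_{Y}=\lim_n R/\ideal{\underline{s}^n}$ is the tower's degree-zero truncation. The canonical map of ring objects $\hat{\unit}_Y\to\hat{R}_{Y}$ produced by the universal property of $(-)^\wedge_Y$ (using that $\hat{R}_{Y}$ is itself $Y$-complete) is an isomorphism precisely when the higher derived-limit contributions of the Koszul tower vanish, and a Mittag--Leffler analysis of the pro-system $\{\rmH_\sbull(\kos_R(\underline{s}^n))\}_n$ identifies this vanishing with the Koszul-completeness condition~(i).

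The main obstacle is propagating~(i), stated only for the sequence~$\underline{s}$ itself, to the whole Koszul tower $\{\kos_R(\underline{s}^n)\}_n$. This is expected to proceed by induction on the length~$r$ of the sequence---reducing via the multiplicativity of the Koszul construction to the single-element case---combined with a careful tracking of Mittag--Leffler behaviour for the resulting pro-systems of homology groups. The full strength of~(i) in \emph{all} homological degrees (not merely in degree zero, which would only record the trivial isomorphism $R/I\cong\hat{R}_{}/I\hat{R}_{}$) is precisely what is needed to make this Mittag--Leffler argument go through.
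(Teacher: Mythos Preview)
Your proposal has a genuine gap in the (ii)$\Leftrightarrow$(iii) direction. You claim that \Cref{Thm:tt-compl} ``provides a canonical tt-equivalence between a `perfect modules' category over the ring object~$\hat{\unit}_Y$ and $(\Der(R)^\wedge_Y)^d$'', but that theorem says nothing of the sort: it only compares $\cT_Y$ (equivalently~$\hTY$) with $\cS_{Y'}$ for a geometric functor~$f^*\colon\cT\to\cS$. Even granting~(iii), the abstract theorem (via \Cref{Thm:D(R)-completion}) only yields an equivalence $\Der(R)^\wedge_Y\simeq\Der(\hat R)^\wedge_{Y'}$, hence $(\Der(R)^\wedge_Y)^d\simeq(\Der(\hat R)^\wedge_{Y'})^d$. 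To finish, you must still identify $(\Der(\hat R)^\wedge_{Y'})^d$ with $\Dperf(\hat R)$. This is precisely the content of \Cref{Thm:dualizable-complete}/\Cref{Cor:dualizable-complete}: when the ring is classically complete, the dualizable objects in the derived-complete category are exactly the perfect complexes. That statement is the technical heart of the paper (all of \Cref{sec:dualizables-in-TY}), proved by an induction on homological amplitude using \Cref{Lem:3} and lifting of projectives; your proposal omits it entirely. Your converse argument is also too quick: a tt-equivalence as in~(ii) identifies units inside the two \emph{abstract} categories, but extracting an isomorphism $\hat\unit_Y\cong\hat R_Y$ \emph{in~$\Der(R)$} requires more; the paper instead deduces full faithfulness of~$f^*$ on~$\cT_Y^c$ from~(ii) and reads off Koszul-completeness for $b=\kos(\underline{s})$ directly.

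For (i)$\Leftrightarrow$(iii) you propose a direct Mittag--Leffler analysis of the Koszul tower and correctly identify the obstacle: (i) concerns only $\kos(\underline{s})=k^{(1)}$, not all~$k^{(n)}$. The paper avoids this problem completely. It applies \Cref{Thm:tt-compl} with $\cS=\Der(\hat R)$ and $k=\kos(\underline{s})$: condition~(iv) there is literally Koszul-completeness, and condition~(i) there is $\hat\unit_Y\simeq f_*(\unit_{\cS})=\hat R_Y$. The equivalence (i)$\Leftrightarrow$(iv) of \Cref{Thm:tt-compl} (packaged as \Cref{Thm:D(R)-completion}) thus delivers (i)$\Leftrightarrow$(iii) of the main theorem with no tower propagation needed; the passage to all~$k^{(n)}$ is absorbed into the general fact that a single compact generator of~$\cT_Y^c$ detects full faithfulness on the whole localizing ideal.
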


As we shall see, our proof is more involved than the proof given in~\cite{BensonIyengarKrausePevtsova23} in the special case where $R$ is local and noetherian and~$Y=\{\gm\}$ is the closed point.
As is already apparent from its title, the article \cite{BensonIyengarKrausePevtsova23} adopts a very local focus;
noetherianity of~$R$ is used at every step and the residue field $R/\gm$ is used for dimension arguments.
None of those tools are available to us. We give a completely independent proof that does not assume their special case.
This being said, we owe to~\cite{BensonIyengarKrausePevtsova23} the insight that such statements could even be true.

Let us return to the original question of recovering~$\hat{\cT}\coloneqq\Der(\hat{R}_{})$ from~$\cT=\Der(R)$.
In the Koszul-complete case (\eg\ if~$R$ is noetherian), we have recovered the small objects~$\hat{\cT}^c$ from~$\cT$.
Those readers who want to think of~$\cT$ and its acolytes~$\hTY$ and~$(\hTY)^d$ as homotopy categories of some $\infty$-categories can now construct $\smash{\hat{\cT}}$ as the \emph{Ind-completion} of~$\smash{\hat{\cT}^c}$.
In the non-Koszul-complete case, \ie when the two rings $\hat{\unit}_Y$ and~$\hat{R}_{Y}$ are different, we need to make a choice.
Since $I$-adic completion has long been a source of headaches outside the noetherian world, now might be the time to let go of~$\hat{R}_{Y}$, to opt for the better-behaved $(\hTY)^d$, and to simply Ind-complete the latter to define~$\smash{\hat{\cT}}$
in full generality, as recently proposed in Naumann--Pol--Ramzi~\cite{NaumannPolRamzi24}.
We comment further on this topic in the final \Cref{Rem:final}.

The outline of the paper is relatively straightforward. In \Cref{sec:tt-completion}, we review the abstract notion of completion in `big' tt-categories~$\cT$.
In \Cref{sec:comm-alg}, we specialize the discussion to $\cT=\Der(R)$ and we contrast tt-completion with classical $I$-adic completion.
The technical heart of the paper beats in \Cref{sec:dualizables-in-TY} where we prove that when $R\cong\hat{R}_{Y}$ is classically complete, an object of~$\Der(R)^\wedge_Y$ is dualizable in that category if and only if it is a perfect complex. This is \Cref{Cor:dualizable-complete}, which holds unconditionally and may be of independent interest.
In the final \Cref{sec:main}, we prove \Cref{Thm:main-intro} and its corollaries.

\begin{Ack}
The authors would like to thank Greg Stevenson for numerous discussions about completion, over many years.
\end{Ack}


\section{Tensor-triangular completion}
\label{sec:tt-completion}%


In this section, $\cT$ stands for a rigidly-compactly generated `big' tensor-triangulated category, as recalled in the introduction.
See details in \cite{BalmerFavi11} if necessary, or in~\cite{HoveyPalmieriStrickland97} where such~$\cT$ are called `unital algebraic stable homotopy categories'.
We denote the internal hom in $\cT$ by $[-,-]\colon \cT{\op}\times\cT\to \cT$.

\begin{Rec}
\label{Rec:idempotents}%
Let $Y\subseteq\SpcT$ be a Thomason subset of the tt-spectrum, for instance a closed subset with quasi-compact open complement.
The choice of the Thomason subset~$Y$ is equivalent to the choice of the thick $\otimes$-ideal of~$\cT^c$:
\[
\cT^c_Y=\SET{ x \in \cat T^c}{\supp(x)\subseteq Y\textrm{ in }\SpcT}.
\]
A central player in this article is the (smashing) localizing $\otimes$-ideal
\[
\cT_Y:=\Loc(\cT^c_Y)
\]
generated by~$\cT^c_Y$ in~$\cT$. This tt-category $\cT_Y$ of \emph{objects of~$\cT$ supported on~$Y$} is sometimes called the \emph{torsion} part.
(See more on its tensor-structure in \Cref{Rem:TY=hTY}.)
The compact objects in~$\cT_Y$ are the above $\cT_Y^c$ by \cite{Neeman92b}; removing any ambiguity,~$(\cT_Y)^c=(\cT^c)_Y=:\cT^c_Y$.
The category $\cT$ admits a recollement with respect to $\cT_Y$ and its orthogonal~$(\cT_Y)^\perp=\SET{u\in\cT}{\Hom_{\cT}(t,u)=0,\,\forall\,t\in\cT_Y}$.
As explained in~\cite{BalmerFavi11}, the functors that appear in this recollement can all be described in terms of two objects~$\eY$ and~$\fY$ that fit in the so-called \emph{idempotent triangle}
\begin{equation}\label{eq:idemp-triangle}%
\eY\to \unit \to \fY\to \Sigma \eY\,.
\end{equation}
The latter is uniquely characterized by $\eY\in\cT_Y$ and~$\fY\in(\cT_Y)^\perp$.
We have $\eY\otimes\fY=0$, forcing $\eY\potimes{2}\cong\eY$ and $\fY\cong\fY\potimes{2}$.
The recollement is then given by:
\begin{equation}
\label{eq:recollement}%
\vcenter{\xymatrix@R=2em{
\cT_Y =\eY\otimes\cT \ar@{ >->}@<-2em>[d]_-{\incl} \ar@{ >->}@<2em>[d]^-{[\eY,-]}
\\
\cT \ar@{->>}[u]|{\vphantom{I_J}\eY\otimes-} \ar@{->>}@<-2em>[d]_(.45){\fY\otimes-} \ar@{->>}@<2em>[d]^-{[\fY,-]}
\\
(\cT_Y)^\perp = \fY\otimes\cT \ar@{ >->}[u]|(.55){\vphantom{I_J}\incl}
}}
\end{equation}
\end{Rec}

\begin{Rec}
\label{Rec:Bousfield}%
The Bousfield--Neeman theory of localization of triangulated categories in terms of orthogonal subcategories can be found in~\cite[Chapter~9]{Neeman01}.
The three vertical sequences
\[
\cS\,\overset{F}{\into} \,\cT\,\overset{G}{\onto} \,\cU
\]
that appear in~\eqref{eq:recollement} are `exact' in the sense that~$G$ identifies $\cU$ with the Verdier quotient~$\cT/F(\cS)$.
When $F$ has a right (respectively, left) adjoint then so does~$G$, and the quotient $\cT/F(\cS)$ becomes equivalent to the subcategory~$\cS^{\perp}$
(respectively, ${}^\perp\cS$)
of~$\cT$ via that adjoint.

In particular, for the middle sequence $(\cT_Y)^\perp\into \cT\onto \cT_Y$ in~\eqref{eq:recollement}, since the inclusion $(\cT_Y)^\perp\into \cT$ has adjoints on both sides, the Verdier quotient $\cT/\cT_Y^\perp$ identifies both with~${}^\perp((\cT_Y)^\perp)=\cT_Y$ and with~$((\cT_Y)^\perp)^\perp$. The former are the {\Ytorsion} objects and the latter are the {\Ycomplete} ones.
This equivalence $\cT_Y\cong(\cT_Y)^{\perp\perp}$ goes back to Dwyer--Greenlees~\cite{DwyerGreenlees02} in the case of derived categories.
\end{Rec}

\begin{Def}
\label{Def:tt-completion}%
Following the literature, we define $\hTY:=(\cT_Y)^{\perp\perp}$ and call it the subcategory of \emph{{\Ycomplete}} objects (or \emph{tt-complete along~$Y$}).
It is sometimes denoted~$\cT_{\textrm{comp}}$.
Since $(\cT_Y)^\perp=(\cT_Y^c)^\perp$, this definition agrees with~\eqref{eq:TcY-perp-perp}.
As explained above, $\hTY$ is equivalent to the Verdier quotient $\cT\big/(\cT_Y)^\perp$ of~$\cT$ by the localizing $\otimes$-ideal~$(\cT_Y)^\perp$ and in particular $\hTY$ inherits from~$\cT$ the structure of a tensor-triangulated category making the localization functor~$\cT\onto \hTY$ into a tt-functor.
\end{Def}

Let us unpack \Cref{Rec:Bousfield} explicitly in our setting.
\begin{Rem}
\label{Rem:TY=hTY}%
The generalized Dwyer--Greenlees equivalence $\cT_Y\cong(\cT_Y)^{\perp\perp}$ is given here by $[\eY,-]\colon\cT_Y\isoto (\cT_Y)^{\perp\perp}$ with inverse given by~$\eY\otimes-$.
In other words, the top part of~\eqref{eq:recollement} can be written in either of the following equivalent forms:
\begin{equation}
\label{eq:TY-vs-hTY}%
\vcenter{\xymatrix@C=10em{
\cT_Y =\eY\otimes\cT \ \ar@{ >->}@/_2em/[d]_-{\incl} \ar@{ >->}@/^2em/[d]^-{[\eY,-]} \ar@/^.5em/[r]^-{[\eY,-]} \ar@{}[r]|-{\cong}
&
\ (\cT_Y)^{\perp\perp} =[\eY,\cT] \ar@{ >->}@/_2em/[d]_-{\eY\otimes-} \ar@{ >->}@/^2em/[d]^-{\incl} \ar@/^.5em/[l]^-{\eY\otimes-}
\\
\quad \cT \quad \ar@{->>}[u]|{\vphantom{I_J}\eY\otimes-} \ar@{=}[r]
&
\quad \cT \quad \ar@{->>}[u]|{\vphantom{I_J}[\eY,-]}
\\
(\cT_Y)^\perp \ar@{ >->}[u]|-{\incl} \ar@{=}[r]
& (\cT_Y)^\perp \ar@{ >->}[u]|-{\incl}
}}
\end{equation}
and the localization~$\cT\onto \cT/(\cT_Y)^\perp$ mentioned in~\Cref{Def:tt-completion} is simply the tt-functor~$[\eY,-]\colon \cT\onto (\cT_Y)^{\perp\perp}$ on the right-hand side of~\eqref{eq:TY-vs-hTY}.
It can equivalently be realized via the functor~$\eY\otimes-\colon \cT\onto \cT_Y$ on the left-hand side of~\eqref{eq:TY-vs-hTY}.
Note that the composite~$\cT\onto \cT/(\cT_Y)^\perp\into\cT$ of the localization followed by its \emph{right} adjoint will always be~$[\eY,-]\colon \cT\to \cT$ in both formulations!
This is one reason why people prefer the latter as the concrete realization of the {\Ycompletion} of~\Cref{Def:tt-completion}:
\begin{equation}
\label{eq:tt-completion}%
\vcenter{\xymatrix@C=1em@R=.2em{
(-)^\wedge_Y:=[\eY,-] \ar@{}[r]|-{\colon}
& \cT\quad \ar@{->>}[rr]
&& \quad \hTY:=(\cT_Y)^{\perp\perp}
\\
& t \quad \ar@{|->}[rr]
&& \quad \hat{t}_Y:=[\eY,t].
}}
\end{equation}

Another reason to use $[\eY,-]$ is that if we write the object $\eY$ as a sequential homotopy colimit of objects~$k_n$ in~$\cT_Y^c$ then $[\eY,t]$ is a homotopy \emph{limit} of the objects $[k_n,t]\cong k_n^\vee\otimes t$ in~$\cT_Y$.
(See for instance \Cref{Cor:[et]}.) This gives a good `completion vibe' to~$(-)^\wedge_Y$: The fact that $[\eY,t]$ is a limit of the {\Ytorsion} objects~$k_n^\vee\otimes t$ is reminiscent of the way $I$-adic completion~$\hat{M}$ is the limit of the $I$-primary torsion objects~$M/I^n$.

For the torsion part, the localization functor~$\cT\onto \cT_Y$ is simply~$\eY\otimes-$ and the tensor in~$\cT_Y$ is simply the ambient one in~$\cT$ (making sure to use the correct unit:~$\eY$).
On the other hand, the tensor in~$(\cT_Y)^{\perp\perp}$ is a \emph{completed tensor}, meaning that one needs to apply $[\eY,-]$ after tensoring {\Ycomplete} objects in~$\cT$.
This completed tensor is nothing remarkable though. The equivalence~$[\eY,-]\colon\cT_Y\isoto (\cT_Y)^{\perp\perp}$ preserves the tensor.
After all, we are just discussing the quotient of~$\cT$ by a $\otimes$-ideal: $\hTY\cong\cT/\cT_Y^\perp$.
\end{Rem}

\begin{Rem}
\label{Rem:tt-complete}%
By \Cref{Def:tt-completion}, an object~$t\in\cT$ is tt-complete (along~$Y$) when it belongs to~$(\cT_Y)^{\perp\perp}$, that is, when the unit map~$t\to \hat{t}_Y=[\eY,t]$ is an isomorphism in~$\cT$.
This is equivalent to the vanishing of~$[\fY,t]$ in~$\cT$, by applying~$[-,t]$ to~\eqref{eq:idemp-triangle}.
\end{Rem}

\begin{Rem}
\label{Rem:TY<->hTY}%
In view of \Cref{Rem:TY=hTY}, everything `tt' that we discuss in this article, like compact or dualizable objects, can be done interchangeably with the tt-category~$(\cT_Y)^{\perp\perp}$ or the tt-equivalent~$\cT_Y$.
Following tradition, we tend to \emph{state} our results about completion in terms of~$\hTY=(\cT_Y)^{\perp\perp}$. However we often \emph{prove} them in the more explicit~$\cT_Y$.
\end{Rem}

\begin{Rem}
The analogy between $\holim_n k_n^\vee\otimes t$ and~$\lim_n M/I^n M$ in \Cref{Rem:TY=hTY} may 	have caused more cognitive harm than good over the years, in particular in creating the illusion that $\hTY$ recovers the usual $I$-adic completion. Let us clear this up.
\end{Rem}

\begin{Rem}
\label{Rem:trivial-tt}%
By construction, $\cT_Y$ is compactly generated by~$\cT_Y^c$, which consists of dualizable objects as these are the images under the tt-functor~$\eY\otimes-\colon \cT\to \cT_Y$ of~$\cT^c_Y\subseteq\cT^d$. But its $\otimes$-unit $\unit_{\cT_Y}=\eY$ might not be compact.

The only option for~$\eY$ to be compact is for~$Y$ to be open. Indeed, if~$\eY$ is compact in~$\cT_Y$ then $\eY\in(\cT_Y)^c=(\cT^c)_Y\subseteq\cT^c$ and therefore the triangle~\eqref{eq:idemp-triangle} belongs to~$\cT^c$ and the right-idempotent~$\fY$ is compact as well; the relation~$\eY\otimes\fY=0$ gives~$\SpcT=\supp(\eY)\sqcup\supp(\fY)=Y\sqcup Y'$, where~$Y'=\supp(\fY)$ is closed; hence~$Y$ is open. Actually, more is true in that case. We must have $\unit\cong \eY\oplus\fY$ and the whole category $\cT=\cT_Y\times \cT_{Y'}=\hTY\times \cT_{Y'}$ decomposes, making the entire story rather trivial.
Compare with Stevenson~\cite{StevensonApp13}.

Of course, the same is true for the tt-equivalent category~$\hTY$ of {\Ycomplete} objects. See \Cref{Rem:TY<->hTY}. The tt-category~$\hTY$ is compactly generated and its compact objects are dualizable but its $\otimes$-unit is compact only if~$\cT$ splits as above.
\end{Rem}

\begin{Exa}
\label{Exa:trivial-Der}%
For instance, if~$\cT=\Der(R)$ is the derived category of a commutative ring~$R$, the units of~$\cT_Y$ and of~$\hTY$ are compact (inside those categories) if and only if~$R\simeq R_1\times R_2$ is the product of two rings, etc.
In particular, the derived category~$\smash{\Der(\hat\bbZ_p)}$ of the $p$-adics cannot be obtained as~$\Der(\bbZ)^\wedge_{(p)}$, nor as~$\Der(\bbZ_{(p)})^\wedge_{(p)}$, and not even as the $p$-complete objects $\Der(\hat\bbZ_p)^\wedge_{(p)}$ over~$\hat\bbZ_p$ itself! It really never works!
\end{Exa}

\begin{Rem}
In conclusion, calling $\hTY$ the {\Ycompletion} of~$\cT$ is a little bit of a misnomer. We revisit this topic in \Cref{Rem:final}.
\end{Rem}

We now want to assume that $\cT$ admits a tt-functor to another tt-category~$\cS$ that plays the role of $\cS=\Der(\hat{R})$ when~$\cT=\Der(R)$.
Let us recall the basics.
\begin{Rec}
\label{Rec:f^*}%
Consider a tensor-exact functor~$f^*\colon \cT\to \cS$ between two `big' tt-categories. Following~\cite{HoveyPalmieriStrickland97}, we say that $f^*$ is a \emph{geometric} functor if it preserves arbitrary coproducts. Since $f^*$ is a tensor functor, it preserves dualizable objects, hence compact objects. By Neeman~\cite{Neeman96}, $f^*$ admits a right adjoint~$f_*\colon \cS\to \cT$ which preserves coproducts and the adjunction $f^*\adj f_*$ satisfies a projection formula. See details in~\cite{BalmerDellAmbrogioSanders16}.

Denote by~$f:=\Spc(f^*)\colon \SpcS\to \SpcT$ the map induced by the tt-functor~$f^*\colon \cT^c\to \cS^c$ on small objects. Let $Y'=f\inv(Y)$ be the preimage of our Thomason subset~$Y$; this  $Y'$ is also a Thomason subset and $f^*\colon \cT\to \cS$ maps~$\cT_Y$ into~$\cS_{Y'}$. Moreover~$f^*(\eY)\cong \ee_{Y'}$ by \cite[Theorem~6.3]{BalmerFavi11}.
\end{Rec}

\begin{Thm}
\label{Thm:tt-compl}%
Let $f^*\colon \cT\to \cS$ be a geometric tt-functor and let~$Y'=f\inv(Y)$ as in \Cref{Rec:f^*}.
Suppose that $\cS$ satisfies the following two hypotheses:
\begin{enumerate}[\rm($\cS$1)]
\item
\label{it:S1}%
The unit of~$\cS$ is tt-complete along~$Y'$, that is, $\unit\cong\hat{\unit}_{Y'}$ in~$\cS$.
\smallbreak
\item
\label{it:S2}%
The right adjoint~$f_*\colon \cS\to \cT$ is conservative, or equivalently, $\cS^c$ is generated, as a thick triangulated subcategory, by the image~$f^*(\cT^c)$.
\end{enumerate}
Then the following three properties are equivalent:
\begin{enumerate}[\rm(i)]
\item
\label{it:tt-compl-1}%
There exists an isomorphism $\hat{\unit}_{Y}\simeq f_*(\unitS)$ of ring objects in~$\cT$.
\smallbreak
\item
\label{it:tt-compl-2}%
The functor~$f^*\colon \cT\to \cS$ is fully faithful on~$\cT_Y$.
\smallbreak
\item
\label{it:tt-compl-3}%
The functor $f^*\colon \cT\to \cS$ restricts to an equivalence $\cT_Y\isoto \cS_{Y'}$.
\end{enumerate}
Suppose furthermore that $Y$ is closed and that $k\in \cT^c$ is a compact object such that $\supp(k)=Y$. Then the above three properties are equivalent to:
\begin{enumerate}[\rm(i)]
\setcounter{enumi}{3}
\item
\label{it:tt-compl-4}%
The unit~$k\to f_*(f^*(k))$ is an isomorphism.
\end{enumerate}
\end{Thm}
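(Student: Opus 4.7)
The plan is to establish the cycle (iii)$\Rightarrow$(ii)$\Rightarrow$(iii) augmented by (iii)$\Rightarrow$(i)$\Rightarrow$(ii), and then to prove (iv)$\Leftrightarrow$(i) separately under the additional hypothesis. The implication (iii)$\Rightarrow$(ii) is tautological. For (ii)$\Rightarrow$(iii), the restriction $f^*|_{\cT_Y}\colon \cT_Y \to \cS_{Y'}$ (well-defined since $f^*(\eY) \cong \ee_{Y'}$) is fully faithful by assumption and preserves coproducts, so its essential image is a localizing subcategory of~$\cS_{Y'}$ containing~$f^*(\cT_Y^c)$. Essential surjectivity will follow by invoking~$(\cS 2)$ in the form $\cS = \Loc(f^*\cT^c)$ and then smashing with the idempotent $\ee_{Y'} = f^*\eY$ to obtain $\cS_{Y'} = \Loc(f^*(\eY \otimes \cT^c))$: each generator $\eY \otimes c$ lies in $\cT_Y$ and hence in the essential image.

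For (iii)$\Rightarrow$(i), I would compute $\Hom_\cT(t, f_*\unit_\cS)$ by adjunction and by combining~$(\cS 1)$ with the general identity $\hat\unit_Y \cong [\eY, \eY]$ (which follows from applying $[\eY,-]$ to the idempotent triangle~\eqref{eq:idemp-triangle} and using $[\eY, \fY] = 0$) to rewrite it as $\Hom_\cS(f^*(t \otimes \eY), \ee_{Y'})$. The equivalence from~(iii) then identifies this with $\Hom_{\cT_Y}(t \otimes \eY, \eY) = \Hom_\cT(t, [\eY, \eY]) = \Hom_\cT(t, \hat\unit_Y)$, and Yoneda yields the natural isomorphism $f_*\unit_\cS \cong \hat\unit_Y$; compatibility with ring structures follows from the universal property of completion, since both sides are $Y$-complete ring objects under~$\unit$, forcing the Yoneda isomorphism to coincide with the canonical factorization. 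For (i)$\Rightarrow$(ii), the projection formula gives $f_* f^* y \cong y \otimes f_* \unit_\cS \cong y \otimes \hat\unit_Y$ for compact~$y$; when $y \in \cT_Y^c$ is moreover dualizable, this further simplifies to $y \otimes [\eY, \unit] \cong [\eY, y] = \hat y_Y$. Since $x = \eY \otimes x$ for every $x \in \cT_Y$, the adjunction unit $y \to \hat y_Y$ induces an isomorphism on $\Hom_\cT(x, -)$, and a standard compact-generation argument (using $\cT_Y = \Loc(\cT_Y^c)$ and that $\cT_Y^c$ consists of compact objects of~$\cT$) then extends full faithfulness from $\cT_Y^c$ to all of~$\cT_Y$.

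Finally, for~(iv), the projection formula identifies the adjunction unit $k \to f_* f^* k$ with $k \otimes \eta$, where $\eta\colon \unit \to f_* \unit_\cS$. The crucial input, valid under $\supp(k) = Y$, is that $\ker(k \otimes -) = \ker(\eY \otimes -) = (\cT_Y)^\perp$ as localizing $\otimes$-ideals: the inclusion $\supseteq$ uses $k = \eY \otimes k$, while $\subseteq$ follows from $\eY$ being a filtered homotopy colimit of compacts~$k_n$ in the thick $\otimes$-ideal~$\cT_Y^c$, which by Thomason's classification is generated by~$k$. Hence~(iv) is equivalent to $[\eY, \eta]\colon \hat\unit_Y \to [\eY, f_*\unit_\cS] = f_*\unit_\cS$ (the last equality by~$(\cS 1)$) being an isomorphism, which by the same universal-property argument is equivalent to~(i).

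The step I expect to require the most care is (ii)$\Rightarrow$(iii), where hypothesis~$(\cS 2)$ must be combined with the smashing nature of $\ee_{Y'} \otimes -$ in order to propagate essential surjectivity from the compact generators to all of~$\cS_{Y'}$. The compatibility of the isomorphism in~(i) with the ring structures also relies on the universal property of completion, as does the final passage from~(iv) to~(i).
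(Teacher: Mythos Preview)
Your argument is correct, and the overall logical skeleton matches the paper's, but several of your sub-arguments take a different path.  For (i)$\Rightarrow$(ii) the paper observes directly that the ring isomorphism forces the triangle $\unit\to\hat\unit_Y\simeq f_*(\unit_{\cS})$ to commute, so $\eY\otimes\eta_{\unit}$ is an isomorphism (the cone of $\eta^{\otimes}$ lies in $(\cT_Y)^{\perp}$); then the projection formula $\eta_t\cong t\otimes\eta_\unit$ instantly gives $\eta_t$ an isomorphism for every $t\in\eY\otimes\cT=\cT_Y$, without ever singling out compact~$y$ or passing through~$\hat y_Y$.  Your detour is valid---indeed every $y\in\cT_Y^c$ is already $Y$-complete, so $\eta_y$ really is an isomorphism---but the ``standard compact-generation argument'' you invoke is precisely the localizing-subcategory argument on~$\eta_t$ that the paper executes in one line.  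For (iii)$\Rightarrow$(i) the paper avoids Yoneda altogether: it passes to right adjoints in the commuting square of tt-functors $\cT\rightrightarrows\cS_{Y'}$, obtaining an isomorphism of \emph{lax-monoidal} functors $[\eY,-]\circ G^{-1}\cong f_*\circ[\ee_{Y'},-]$, which evaluated at the unit gives the ring isomorphism directly; your Yoneda-plus-universal-property route reaches the same conclusion but needs the extra check that $f_*\unit_{\cS}$ is $Y$-complete and that the two maps under~$\unit$ agree.  Finally, the paper links (iv) to (ii) rather than to (i): it simply notes that $\eta_t$ is an isomorphism for all $t$ in the localizing $\otimes$-ideal generated by~$k$, which is~$\cT_Y$.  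Your route through $\ker(k\otimes-)=\ker(\eY\otimes-)$ and $[\eY,\eta]$ is a pleasant alternative that makes the role of~$(\cS1)$ in the (iv)-equivalence more visible.
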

\begin{Rem}
The proof will show that \eqref{it:tt-compl-1}$\then$\eqref{it:tt-compl-2} and \eqref{it:tt-compl-3}$\then$\eqref{it:tt-compl-2} hold without Hypotheses~($\cS$\ref{it:S1}) and~($\cS$\ref{it:S2}), and so does~\eqref{it:tt-compl-2}$\Leftrightarrow$\eqref{it:tt-compl-4} when $Y$ is closed. Hypothesis~($\cS$\ref{it:S1}) is used to prove \eqref{it:tt-compl-3}$\then$\eqref{it:tt-compl-1} and Hypothesis~($\cS$\ref{it:S2}) is used to prove \eqref{it:tt-compl-2}$\then$\eqref{it:tt-compl-3}.
\end{Rem}

\begin{proof}[Proof of \Cref{Thm:tt-compl}]
Suppose~\eqref{it:tt-compl-1}: We have a commutative triangle in~$\cT$
\[
\xymatrix{
& \unit \ar[ld]_-{\eta^{\otimes}} \ar[rd]^-{\eta_\unit}
\\
\hat{\unit}_Y \ar[rr]^-{\simeq}&&f_*(\unitS)
}
\]
where $\eta^{\otimes}\colon \unit\to [\eY,\eY]$ is the unit of the tensor-hom adjunction and $\eta_{\unit}$ is the unit of the $f^*\adj f_*$ adjunction at~$\unit$.
Since the former becomes an isomorphism under~$\eY\otimes-$ so does the latter: $\eY\otimes\eta_{\unit}$ is an isomorphism.
By the identification of the projection formula, the unit~$\eta_t\colon t\to f_*f^*(t)\cong t\otimes f_*(\unitS)$ is nothing but~$t\otimes \eta_{\unit}$.
We have shown that $\eta_t$ is an isomorphism for~$t=\eY$ and therefore for all~$t\in \eY\otimes \cT=\cT_Y$. This proves that $(f^*)\restr{\cT_Y}\colon \cT_Y\to \cS$ is fully faithful, as claimed in~\eqref{it:tt-compl-2}.

Suppose that $f^*$ is fully faithful on~$\cT_Y=\Loc(\cT^c_Y)$ as in~\eqref{it:tt-compl-2}. The essential image of~$(f^*)\restr{\cT_Y}\colon \cT_Y\to \cS$ is then generated as a localizing subcategory by~$f^*(\cT^c_Y)$. Thanks to Hypothesis~($\cS$\ref{it:S2}) the latter is a tensor-ideal in~$\cS^c$ and it coincides with~$\cS^c_{Y'}$ by \cite[Theorem~6.3]{BalmerFavi11} again. In short, $f^*(\cT_Y)=\Loc(\cS^c_{Y'})=\cS_{Y'}$ as claimed in~\eqref{it:tt-compl-3}.

Now suppose~\eqref{it:tt-compl-3}. We have a diagram
\[
\xymatrix@C=8em{
\cT_Y \ar[r]^-{G:=(f^*)\restr{\cT_Y}}_-{\cong} \ar@{ >->}@<-.5em>[d]_-{\incl}
& \cS_{Y'} \ar@{ >->}@<-.5em>[d]_-{\incl}
\\
\cT \ar[r]^-{f^*} \ar@{->>}@<-.5em>[u]_-{\eY\otimes-}
& \cS \ar@{->>}@<-.5em>[u]_-{\ee_{Y'}\otimes-}
}
\]
where the square with inclusions commutes by definition of~$G$. The other square also commutes, \ie we have an isomorphism of tt-functors from~$\cT$ to~$\cS_{Y'}$
\[
G\circ (\eY\otimes-) \cong (\ee_{Y'}\otimes-)\circ f^*
\]
since $f^*$ is a tensor functor and~$\ee_{Y'}\cong f^*(\eY)$.
Taking right adjoints in the latter isomorphism of tensor functors and using that $G\inv$ is the right adjoint of~$G$, we obtain an isomorphism of lax-monoidal functors $\cS_{Y'}\to \cT$:
\[
[\eY,-] \circ G\inv \cong f_*\circ [\ee_{Y'},-].
\]
Evaluating at the unit~$\ee_{Y'}=G(\eY)$ of~$\cS_{Y'}$ we obtain an isomorphism of ring objects
\[
[\eY,\eY] \cong f_*([\ee_{Y'},\ee_{Y'}])
\]
in~$\cT$. By Hypothesis~($\cS$\ref{it:S1}), the object~$[\ee_{Y'},\ee_{Y'}]=\hat{\unit}_{Y'}$ on the right-hand side agrees with~$\unitS$ and this gives us~\eqref{it:tt-compl-1}.

Finally, Condition~\eqref{it:tt-compl-4} is equivalent to the unit~$\eta_t\colon t\to f_*(f^*(t))$ being an isomorphism for all~$t$ in the localizing $\otimes$-ideal of~$\cT$ generated by~$k$, which is precisely~$\cT_Y$. Thus~\eqref{it:tt-compl-4} is equivalent to $f^*\colon \cT_Y\to \cS$ being fully faithful as in~\eqref{it:tt-compl-2}.
\end{proof}

\begin{Rem}
In \Cref{Thm:tt-compl}, we cannot drop Hypothesis~($\cS$\ref{it:S1}) for a trivial reason: Take~$f^*=\Id_{\cT}$ in a tt-category~$\cT$ whose unit is not tt-complete along~$Y$; in that case~\eqref{it:tt-compl-1} fails but the other properties are trivially true.

We cannot drop Hypothesis~($\cS$\ref{it:S2}) either.
To see this, consider the easy example of $Y=\SpcT$ itself, in which case~$\eY=\unit$ and tt-completeness is a void condition.
In that situation, a mere isomorphism $f_*(\unit_\cS)\simeq\unit_\cT$ does not force $f^*$ to be an equivalence, as for instance with $f^*\colon \Der(R)\to \Der(\mathbb{P}_{\hspace{-0.4ex}R}^1)$.
In that example, \eqref{it:tt-compl-1}, \eqref{it:tt-compl-2} and~\eqref{it:tt-compl-4} hold true but~\eqref{it:tt-compl-3} fails.
\end{Rem}

\begin{Rem}
\label{Rem:k-complete}%
Suppose that $Y$ is closed in~\Cref{Thm:tt-compl}.
Since~\eqref{it:tt-compl-1}--\eqref{it:tt-compl-3} are independent of~$k$ we see that if Property~\eqref{it:tt-compl-4} holds for one choice of $k$ in~$\cT^c_Y$ then it holds for all other choices. In other words, \eqref{it:tt-compl-4} only depends on~$Y$.
\end{Rem}

\begin{Cor}
\label{Cor:tt-compl}%
With notation as in \Cref{Thm:tt-compl}, let $\hat{f}^*\colon \hTY\to \SYp$ be the tt-functor $t\mapsto (f^*(t))^\wedge_{Y'}=[\ee_{Y'},f^*(t)]$ obtained by $Y'$-completion of~$f^*$.
Then the equivalent conditions of \Cref{Thm:tt-compl} are furthermore equivalent to
\begin{enumerate}[\rm(i')]
\setcounter{enumi}{1}
\item
\label{it:tt-compl-2'}%
The functor~$\hat{f}^*\colon \hTY\to \SYp$ is fully faithful.
\smallbreak
\item
\label{it:tt-compl-3'}%
The functor~$\hat{f}^*\colon \hTY\to \SYp$ is an equivalence.
\end{enumerate}
\end{Cor}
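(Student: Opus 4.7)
The plan is to reduce the corollary to \Cref{Thm:tt-compl} by means of the Dwyer--Greenlees equivalence of \Cref{Rem:TY=hTY}. The intuition of \Cref{Rem:TY<->hTY} is that ``everything tt'' can be discussed equivalently in the torsion category~$\cT_Y$ or in the completion~$\hTY$; under these equivalences the functor~$\hat{f}^*$ should correspond to the restriction $f^*|_{\cT_Y}\colon \cT_Y\to \cS_{Y'}$ (which lands in $\cS_{Y'}$ by \Cref{Rec:f^*}), whose fully-faithfulness and essential surjectivity are captured by conditions~\eqref{it:tt-compl-2} and~\eqref{it:tt-compl-3} of the theorem.

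To make this precise, my first step is to establish a commutative square of tt-functors
\[
\xymatrix@R=2em@C=4em{
\cT_Y \ar[r]^-{f^*|_{\cT_Y}} \ar[d]_-{[\eY,-]}^-{\cong}
& \cS_{Y'} \ar[d]^-{[\ee_{Y'},-]}_-{\cong}
\\
\hTY \ar[r]^-{\hat{f}^*}
& \SYp
}
\]
with vertical arrows the Dwyer--Greenlees equivalences. For commutativity, fix $t\in\cT_Y$. On the one hand $\hat{f}^*([\eY,t])=[\ee_{Y'},f^*([\eY,t])]$. On the other hand, $\eY\otimes[\eY,t]\cong t$ by Dwyer--Greenlees, and the identification $f^*(\eY)\cong\ee_{Y'}$ from \Cref{Rec:f^*} then yields $\ee_{Y'}\otimes f^*([\eY,t])\cong f^*(t)$. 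Setting $v\coloneqq f^*([\eY,t])$ and applying $[\ee_{Y'},-]$ to the idempotent triangle $\ee_{Y'}\otimes v\to v\to \ff_{Y'}\otimes v$, the fact that $\ff_{Y'}\otimes\cS=(\cS_{Y'})^\perp$ (see~\eqref{eq:recollement}) is killed by the completion functor $[\ee_{Y'},-]$ gives $[\ee_{Y'},v]\cong[\ee_{Y'},\ee_{Y'}\otimes v]\cong [\ee_{Y'},f^*(t)]$, which matches the other composite in the square.

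Granting this commutative square, the vertical equivalences immediately transfer properties between the two horizontal arrows: $f^*|_{\cT_Y}$ is fully faithful (resp. an equivalence) if and only if $\hat{f}^*$ is. Combined with \Cref{Thm:tt-compl}, this yields \eqref{it:tt-compl-2}$\Leftrightarrow$\eqref{it:tt-compl-2'} and \eqref{it:tt-compl-3}$\Leftrightarrow$\eqref{it:tt-compl-3'}, completing the extended chain of equivalences.

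I do not anticipate any substantive obstacle. The only care required is in checking the commutativity of the square above and the naturality of the isomorphism, both of which reduce to standard manipulations of the identification $f^*(\eY)\cong\ee_{Y'}$ and of the fact that the tt-localization $[\ee_{Y'},-]$ depends only on the torsion part of its argument.
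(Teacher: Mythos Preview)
Your proposal is correct and follows essentially the same approach as the paper: both arguments set up a commutative square relating $f^*|_{\cT_Y}$ and $\hat f^*$ via the Dwyer--Greenlees equivalences, verify commutativity using $f^*(\eY)\cong\ee_{Y'}$ together with the fact that $[\ee_{Y'},-]$ is insensitive to the $\ff_{Y'}$-part, and then transport properties~\eqref{it:tt-compl-2} and~\eqref{it:tt-compl-3} across the vertical equivalences. The only cosmetic difference is that the paper orients the square using $\eY\otimes-\colon\hTY\to\cT_Y$ (the inverse of your vertical arrow), which makes the commutativity check marginally shorter; your version with $[\eY,-]$ works equally well.
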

\begin{proof}
In view of \Cref{Rem:TY=hTY}, we can transport results from~$\cT_Y$ to~$\hTY$.
We claim that the following diagram of tt-functors commutes
\[
\xymatrix@C=4em{
\cT_Y =\eY\otimes\cT \ \ar[d]_-{f^*}
&
\ (\cT_Y)^{\perp\perp} = \hat \cT_Y \ar[l]^{\cong}_-{\eY\otimes-} \ \ar[d]_-{\hat f^*}
\\
\cS_{Y'} =\ee_{Y'}\otimes\cS \ar[r]^-{[\ee_{Y'},-]}_-{\cong}
&
\ (\cS_{Y'})^{\perp\perp} = \hat \cS_{Y'}
}
\]
where the horizontal tt-equivalences are the ones of~\eqref{eq:TY-vs-hTY}.
Commutativity uses $\ee_{Y'}\cong f^*(\eY)$ and direct computation $[\ee_{Y'},f^*(\eY\otimes t)]\cong[\ee_{Y'},f^*(\eY)\otimes f^*(t)]\cong[\ee_{Y'},\ee_{Y'}\otimes f^*(t)]\cong[\ee_{Y'},f^*(t)]$ which is the formula for~$\hat f^*=(-)^\wedge_{Y'}\circ f^*$.
So we can rephrase Properties~\eqref{it:tt-compl-2} and~\eqref{it:tt-compl-3} about $f^*\colon \cT_Y\to \cS_{Y'}$ in terms of~$\hat f^*$.
\end{proof}

At this stage, the reader probably wants some explicit example. Here it comes.


\section{Ring completion and Koszul complexes}
\label{sec:comm-alg}%


We now turn our attention to commutative algebra. Let $R$ be a commutative ring, which need not be noetherian.
We consider the `big' tt-category~$\cT=\Der(R)$.
We write $I\subset R$ for a finitely generated ideal and $Y=V(I)$ for the associated Thomason closed subset of~$\Spec(R)\cong\SpcT$.
We recall the idempotent triangle~\eqref{eq:idemp-triangle} $\eY\to \unit\to \fY\to \Sigma\eY$ of \Cref{Rec:idempotents}. (See more about this in \Cref{Rem:ef-in-DR}.)

\begin{Rem}
\label{Rem:I-fg}%
A tt-completion of~$\cT$ in the sense of \cref{sec:tt-completion} depends on the choice of a tt-ideal of compact objects, which is necessarily of the form~$\cT^c_Y$ for a unique \emph{Thomason} subset~$Y$ of~$\SpcT\cong\Spec(R)$; see~\cite{Thomason97}.
If, following tradition, one also wants $Y$ to be \emph{closed} then $Y$ will be given by $Y=V(I)$ for some \emph{finitely generated} ideal~$I$ as above.
In particular, non-finitely generated ideals~$I$ are never part of the tt-completion picture, even for non-noetherian rings~$R$.
\end{Rem}

\begin{Rec}
\label{Rec:Der-Y}%
It is well-known that $\cT_Y=\Loc(\Dperf(R)_Y)$ coincides with the subcategory $\DYR$ of complexes of~$R$-modules supported on~$Y$, that is, whose restriction to the open complement~$U:=\Spec(R)\sminus Y$ is acyclic. See~\cite{Neeman92b}.
The tt-complete complexes of~\Cref{Def:tt-completion}, \ie those complexes belonging to
\[
\hTY=(\cT_Y)^{\perp\perp}=\SET{t\in\Der(R)}{t\cong[\eY,t]}
\]
are often called \emph{derived $I$-complete} or \emph{derived {\Ycomplete}}.
The category $\hTY=\Der(R)^\wedge_Y$ is often denoted $\Der(R)_{\mathrm{comp}}$ or $\Der(R;I)_{\mathrm{comp}}$ in the literature.
Finally, recall the \mbox{tt-equivalence} $\DYR\cong\Der(R)^\wedge_Y$ of \Cref{Rem:TY=hTY}.
\end{Rec}

\begin{Rem}
There is a vast literature comparing classical $I$-adic completion $\hat{M}_I=\lim_n M/I^nM$, its derived functors, and tt-completion~$[\eY,-]$, for $R$-modules and for complexes.
It is rich and technical and we cannot do it justice here. The interested reader will find a more detailed account in the Stack Project~\cite[\href{https://stacks.math.columbia.edu/tag/091N}{Section~15.91}]{stacks-project}, referring to earlier work such as~\cite{GreenleesMay92,DwyerGreenlees02,PortaShaulYekutieli14}, to which one could add \cite{Schenzel03,Positselski23} for the derived functors of completion.
We recall some facts.
\end{Rem}

\begin{Ter}
\label{Ter:complete}%
An $R$-module~$M$ is called \emph{classically $I$-complete} if $M\cong\hat{M}_I$.
It is called \emph{$I$-separated} if $\cap_{n}I^nM=0$.
\end{Ter}

\begin{Prop}[{\cite[Proposition~15.91.5]{stacks-project}}]
\label{Prop:classical=>derived}%
An $R$-module $M$ is classically $I$-complete if and only if it is $I$-separated and derived $I$-complete (\Cref{Rec:Der-Y}).
In particular, if~$R\cong\lim_n R/I^n$ is classically $I$-complete then $\unit\cong[\eY,\unit]=\hat{\unit}_Y$, that is, the unit of~$\Der(R)$ is tt-complete along~$Y$.
\qed
\end{Prop}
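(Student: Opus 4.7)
The proposition has two components, and I would structure the proof to reflect this. The main biconditional (classically $I$-complete $\Leftrightarrow$ $I$-separated and derived $I$-complete) is a standard result, so I would simply invoke \cite[\href{https://stacks.math.columbia.edu/tag/091N}{Section~15.91}]{stacks-project} for it rather than reproducing its proof. The substantive part for this paper's purposes is the ``in particular'' clause, and that is where I would concentrate the exposition.

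For the ``in particular'' clause, I would proceed as follows. Assume $R\cong\lim_n R/I^n$, so that $R$ is classically $I$-complete as an $R$-module. Then $R$ is trivially $I$-separated: the structural map $R\to \lim_n R/I^n \subseteq \prod_n R/I^n$ is injective by hypothesis, and its kernel is exactly $\cap_n I^n R$, forcing that intersection to vanish. Applying the main biconditional (the nontrivial forward direction) yields that $R$ is derived $I$-complete as an $R$-module.

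The final step is the translation of ``derived $I$-complete $R$-module'' into the tt-theoretic language of this paper. By~\Cref{Rec:Der-Y} (or the explicit construction of~$\eY$ as a homotopy colimit of Koszul complexes on generators of~$I$), the derived $I$-complete objects in $\Der(R)$ are precisely those in $\hTY=(\cT_Y)^{\perp\perp}$. Since $R$ is the tensor unit~$\unit$ of $\cT=\Der(R)$, this says $\unit\in\hTY$, and by~\Cref{Rem:tt-complete} this is equivalent to the assertion that $\unit\to[\eY,\unit]=\hat\unit_Y$ is an isomorphism. That is the desired conclusion.

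The only conceptual subtlety (and hence the ``main obstacle'', such as it is) is ensuring that the notion of derived $I$-complete module used in~\cite{stacks-project} matches membership in $(\cT_Y)^{\perp\perp}$; this agreement is standard and already implicit in~\Cref{Rec:Der-Y}, which describes $\cT_Y=\DYR$ as the complexes supported on~$Y$, so its right-orthogonal is characterized by the familiar Ext-vanishing conditions against Koszul complexes. Beyond that, the argument is a straightforward unwinding of definitions.
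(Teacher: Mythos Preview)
Your proposal is correct and matches the paper's treatment: the proposition is stated with a citation to the Stacks Project and closed immediately with \qed, so the paper offers no proof beyond the reference. Your elaboration of the ``in particular'' clause is accurate (and more explicit than the paper, which leaves it implicit); the only redundancy is that the forward direction of the biconditional already delivers $I$-separatedness, so your separate verification of it is unnecessary but harmless.
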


Let us make the principal case explicit for the benefit of the reader.
\begin{Exa}
\label{Exa:mono}
Let $s\in R$ and~$Y=V(s)$. In that case, localization away from~$V(s)$ amounts to passing from~$R$ to~$R[1/s]$ and the right idempotent~$\ff_s=\fY$ is simply~$\unit[1/s]=\hocolim(\unit\xto{s}\unit\xto{s}\cdots)$, that is, the complex
\begin{equation}
\label{eq:f_s}%
\ff_s=\big(\cdots \to 0 \to \coprod_{n\in\bbN}R \xto{\id-\tau} \coprod_{n\in\bbN}R \to 0 \to \cdots \big)
\end{equation}
concentrated in homological degrees one and zero, where $\tau$ maps the $n$-th copy of~$R$ to the $(n+1)$-st copy of~$R$, via multiplication by~$s$.
(To be clear~$\bbN=\{0,1,2,\ldots\}$.)
In other words, this complex is a projective resolution of~$R[1/s]$. The canonical map $\unit\to \ff_s$ is the inclusion of~$R$ in the 0-th copy of~$R$ in degree zero.
Consequently $\ee_{V(s)}$ can also be made explicit: It is the homotopy fiber of this map $\unit\to \ff_s$.

For any module~$M$, for instance~$M=R$, we know that $M$ is derived $s$-complete if and only if~$[\ff_s,M]=0$.
Since we described~$\ff_s$ as a bounded complex of projectives in~\eqref{eq:f_s}, we can compute~$[\ff_s,M]$ by applying $\Hom_R(-,M)$ degreewise. We get
\begin{equation}
\label{eq:[fsM]}%
[\ff_s,M]=\big(\cdots \to 0 \to \prod_{n\in\bbN}M \xto{(\id-\tau)^*} \prod_{n\in\bbN}M \to 0 \to \cdots \big)
\end{equation}
concentrated in homological degrees zero and~$-1$; the map~$(\id-\tau)^*\colon M^{\bbN}\to M^{\bbN}$ is easily checked to send $(x_n)_{n\in\bbN}$ to~$(x_n-sx_{n+1})_{n\in\bbN}$.

In conclusion, $M$ is derived $s$-complete if and only if the map~$(\id-\tau)^*$ in~\eqref{eq:[fsM]} is an isomorphism.
It is easy to verify that this holds if $M$ is classically $I$-complete.
In fact, this holds if $M$ is classically $I$-complete for any ideal~$I$ that contains~$s$.
(For surjectivity of~$(1-\tau)^*$, show that every $(y_n)_n$ is the image of $(x_n)_n$ defined by~$x_n=\sum_{\ell=0}^\infty s^\ell y_{\ell+n}$. For injectivity, use~$\cap_ns^nM\subseteq\cap_nI^nM=0$.)
\end{Exa}

\begin{Rem}
\label{Rem:ef-in-DR}%
We gave explicit formulas for the idempotents~$\ee_{V(s)}$ and~$\ff_{V(s)}$ in the monogenic case above.
For a general ideal $I=\ideal{s_1,\ldots,s_r}$ we have $\ee_{V(I)}=\ee_{V(s_1)}\otimes\cdots\otimes\ee_{V(s_r)}$ and one can recover $\ff_{V(I)}$ as the mapping cone of~$\ee_{V(I)}\to \unit$. In summary, the idempotents in the triangle~\eqref{eq:idemp-triangle} can be made very concrete in~$\cT=\Der(R)$.
See also \Cref{Prop:eY} for another description.
\end{Rem}

\begin{Rem}
\label{Rem:ring-compl}%
We have seen in \Cref{Prop:classical=>derived} that classically $I$-complete implies derived $I$-complete. (We also proved it in the monogenic case at the end of \Cref{Exa:mono} and one can jazz that up into a complete proof using an induction argument on the number of generators, thanks to the Mayer--Vietoris properties of idempotents~\cite[Theorem~5.18]{BalmerFavi11}. We leave this to the interested reader.)
As can be expected, the converse is wrong: There are modules~$M$ that are derived $I$-complete but not classically $I$-complete. See Yekutieli~\cite[Example~3.20]{Yekutieli11}.
\end{Rem}

\begin{Exa}
\label{Exa:no<=modules}%
Kedlaya~\cite[\href{https://kskedlaya.org/prismatic/sec_derived-complete.html}{Example~6.1.4}]{prismatic-online} gives a simple example over the ring~$\OO=\hat\bbZ_p$ of $p$-adics.
Take $L=\SET{(x_i)_i\in \OO^\bbN}{\lim_i x_i=0}$ to be the $\OO$-module of zero-converging sequences and let $M=\mathrm{Coker}(\ell\colon L\to L)$ be the cokernel of the monomorphism~$\ell((x_i)_i)\coloneqq(p^i x_i)_i$. Then $M$ is derived $p$-complete, as $L$ is (\eg\ by the criterion of \Cref{Exa:mono}). However, $M$ is not $p$-separated: For instance, the sequence $(p^i)_i$ is a non-zero element of~$\cap_n p^n M$.
\end{Exa}

There are also rings~$R$ that are derived complete but not classically complete.
\begin{Exa}
\label{Exa:no<=rings}%
Let $\OO$ be an $s$-complete ring for some $s\in \OO$ and let $M$ be a derived $s$-complete module that is not $s$-separated (for instance $s=p$ in $\OO=\hat\bbZ_p$ and $M$ as in \Cref{Exa:no<=modules}).
Define the $\OO$-algebra $R=\OO\oplus M$ in which $M$ squares to zero.
Then~$R$ is derived $s$-complete but not classically $s$-complete. The latter is easy, since $\cap_n s^n\cdot R$ contains $\cap_n s^n M\neq0$ in the second factor. So $R$ is not $s$-separated. To see that
the derived completeness of~$\OO$ and of~$M$ over~$\OO$ implies that the square-zero extension~$R$ is derived $s$-complete as an $R$-module,
we can use the discussion of~\Cref{Exa:mono}.
It suffices to show that the map $(\id-\tau)^*$ of~\eqref{eq:[fsM]} is an isomorphism $R^\bbN\to R^\bbN$. This map separates into two factors, one involving~$\OO^{\bbN}$ and one involving~$M^\bbN$, which are both isomorphisms by the derived $s$-completeness of~$\OO$ and of~$M$ over~$\OO$.
\end{Exa}

\begin{Rem}
In summary, we have seen that if the ring homomorphism~$R\to \hat{R}_{{}}$ is an isomorphism then so is the map~$\unit\to \hat{\unit}_Y$ in~$\Der(R)$. We have seen that the converse fails (\Cref{Exa:no<=rings}). Consequently, there is no ring isomorphism $\hat{\unit}_Y\simeq\hat{R}_{{}}$ in general.
Hammering the point: For the ring of \Cref{Exa:no<=rings}, the derived completion~$\hat{\unit}_Y$ is not recovering the classical $I$-adic completion~$\hat{R}_{}$.
We want a simple formulation for when these two notions of completion agree.
\end{Rem}

\begin{Def}
\label{Def:I-kos}%
We say that a sequence $\underline{s}=(s_1,\ldots,s_r)$ in~$R$ is \emph{Koszul-complete} if the canonical map~$R\to \hat{R}_{}=\hat{R}_{I}$ for $I=\langle s_1,\ldots,s_r\rangle$ induces a quasi-isomorphism
\[
\kos(\underline{s})\to \kos(\underline{s})\otimes_R\hat{R}_{}
\]
where~$\kos(\underline{s})=\otimes_{i=1}^r\cone(s_i\colon \unit\to \unit)$ is the usual Koszul complex.
Note that the above tensor could equivalently be a derived tensor since the Koszul complex is perfect.
Once we prove in \Cref{Rem:Kos-complete} that this property only depends on~$Y=V(I)$, not on the choice of~$\underline{s}$, we could also simply say that $Y$ is \emph{Koszul-complete}.
\end{Def}

\begin{Rem}
The induced map in homology~$\rmH_i(\kos_R(\underline{s}))\to \rmH_i(\kos_R(\underline{s})\otimes_R\hat{R}_{})\cong\rmH_i(\kos_{\hat{R}_{}}(\underline{s}))$ is an isomorphism for~$i=0$, since both sides are $R/I$ in that case. Thus~$\underline{s}$ being Koszul-complete means that these maps are isomorphisms for~$i=1,\ldots,r$.
\end{Rem}

\begin{Exa}
\label{Exa:no}%
This property does not hold in general, already for~$r=1$.
For instance, it would fail for the ring of \Cref{Exa:no<=rings}.
One can give a simpler example in the same vein.
Let $p$ be a prime and consider the $\bbZ_{(p)}$-algebra $R=\bbZ_{(p)}\oplus (\bbQ/\bbZ_{(p)})$ with the second term squaring to zero.
We use $s_1=p$.
Observe that $\rmH_1(\kos(p))=\Ker(R\xto{p}R)$ is non-zero; it is~$\bbZ/p$ embedded in~$\bbQ/\bbZ_{(p)}$ via~$1/p$. Since $R/p^n\cong\bbZ/p^n$ for all~$n\geq 1$, the completion $\smash{\hat{R}_{(p)}}\cong\smash{\hat{\bbZ}_{(p)}}$ is a domain. So $\kos(p)\otimes_R\smash{\hat{R}_{(p)}}$ has trivial~$\rmH_1$ and the map~$\kos(s)\to \kos(s)\otimes_R\hat{R}_{(p)}$ is not injective on~$\rmH_1$.
\end{Exa}

This notion of Koszul-completeness is only relevant in the non-noetherian world:
\begin{Prop}\label{prop:noeth-koszul}
Suppose that the ring $R$ is noetherian. Then every sequence $\underline{s}=(s_1,\ldots,s_r)$ is Koszul-complete in the sense of \Cref{Def:I-kos}.
\end{Prop}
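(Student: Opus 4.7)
The plan is to show that $K := \kos_R(\underline{s})$ is sufficiently ``self-complete'' that tensoring with $\hat{R} = \hat{R}_I$ automatically yields a quasi-isomorphism. The argument rests on two classical facts that are specific to the noetherian setting and which both fail in general.

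First I would invoke flatness of $\hat{R}$ over $R$, which holds whenever $R$ is noetherian and $I$ is finitely generated. Flatness gives $\rmH_i(K \otimes_R \hat{R}) \cong \rmH_i(K) \otimes_R \hat{R}$ and reduces the problem to showing that the canonical map $M \to M \otimes_R \hat{R}$ is an isomorphism for each $M = \rmH_i(K)$.

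Next I would identify the relevant extra structure on $\rmH_i(K)$. Multiplication by each $s_j$ is null-homotopic on $\kos_R(s_j) = \cone(s_j\colon \unit\to\unit)$, and therefore on the tensor product~$K$; hence $I$ annihilates $\rmH_*(K)$. Since $K$ is a bounded complex of finitely generated free $R$-modules and $R$ is noetherian, each $\rmH_i(K)$ is in fact a finitely generated $R/I$-module.

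It then remains to observe that any $R/I$-module $M$ satisfies
\[
M \otimes_R \hat{R} \;\cong\; M \otimes_{R/I}(\hat{R}/I\hat{R}) \;\cong\; M \otimes_{R/I} R/I \;=\; M,
\]
using the standard identification $\hat{R}/I\hat{R} \cong R/I$ for noetherian $R$. A quick check on generators shows that the induced map $M \to M \otimes_R \hat{R}$ is the identity under this identification, which finishes the argument. There is no real obstacle here; the proof is short once the right ingredients are at hand. The delicate role of noetherianity is concentrated in the flatness of~$\hat{R}$ and in the identity $\hat{R}/I\hat{R} \cong R/I$, both of which can fail otherwise, which is precisely why Koszul-completeness has to be imposed as an extra hypothesis in the non-noetherian setting of \Cref{Thm:main-intro}.
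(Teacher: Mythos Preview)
Your proof is correct and follows essentially the same route as the paper: flatness of~$\hat{R}$ over~$R$ to commute homology with~$\otimes_R\hat{R}$, the null-homotopy argument to see that~$I$ kills~$\rmH_*(K)$, and then the observation that~$M\otimes_R\hat{R}\cong M$ for any~$R/I$-module~$M$. The paper phrases the last step via~$M\otimes_R\hat{R}\cong\hat{M}_I$ for finitely generated~$M$, whereas you go through~$\hat{R}/I\hat{R}\cong R/I$ (which, incidentally, holds for any finitely generated~$I$ and does not need noetherianity, so your finite-generation remark is not actually used).
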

\begin{proof}
Although probably well-known, we give a proof for completeness.
Write $I=\langle s_1,\ldots,s_r\rangle$ and recall that because $R$ is noetherian, we have $M\otimes_R \hat{R}_{}\cong \hat{M}_I$ for every finitely generated $R$-module~$M$.
We can apply this to one of our homology groups $M=\rmH_i(\kos(\underline{s}))$ for $1\le i\le r$, which is finitely generated because $R$ is noetherian.
On the other hand, the homology of the Koszul complex is killed by~$s_1,\ldots,s_r$, since the maps~$s_i\cdot-$ are homotopic to zero on $\cone(s_i)$ hence on the Koszul complex itself. So our $R$-module~$M=\rmH_i(\kos(\underline{s}))$ satisfies $IM=0$ and thus~$\hat{M}_I\cong M$.
Therefore $\rmH_i(\kos(\underline{s}))=M\cong \hat{M}_I\cong M\otimes_R\hat{R}_{}=\rmH_i(\kos(\underline{s}))\otimes_R\hat{R}_{}\cong
\rmH_i(\kos(\underline{s})\otimes_R\hat{R}_{})$ where the last isomorphism holds because $\hat{R}$ is $R$-flat in the noetherian case.
\end{proof}

\begin{Exa}
If $\underline{s}$ is regular and $\hat{R}_{}$ is flat over~$R$ then $\kos(s)\to R/I$ is a quasi-isomorphism, hence $\kos(s)\otimes\hat{R}_{}\to R/I\otimes \hat{R}_{}$ is an isomorphism by flatness. But $R/I\otimes_R\hat{R}_{}\cong\hat{R}_{}/I\cong R/I$ so $s$ is Koszul-complete, even without $R$ being noetherian.
\end{Exa}

\begin{Rem}\label{Rem:fg-comp-is-comp}
A general fact about classical completion with respect to a finitely generated ideal $I$ is that the completion is itself always complete; that is, the completion $\hat{R}_{}$ is always classically $I'$-complete for the extended (finitely generated) ideal $I'=I\hat{R}_{}$; see \cite[\href{https://stacks.math.columbia.edu/tag/05GG}{Lemma~05GG}]{stacks-project}. This does not require $R$ to be noetherian.
\end{Rem}

\begin{Thm}
\label{Thm:D(R)-completion}%
Let $R$ be a commutative ring, $I\subseteq R$ a finitely generated ideal and~$Y=V(I)$.
Let $f\colon\Spec(\hat{R}_{})\to \Spec(R)$ and let $Y'=f\inv(Y)=V(I')$ be the preimage of~$Y$. (See \Cref{Rem:fg-comp-is-comp}.)
Then the following conditions are equivalent:
\begin{enumerate}[\rm(i)]
\item
\label{it:D(R)-compl-1}%
There exists an isomorphism of ring objects~$[\eY,\unit]\simeq\hat{R}_{}$ in~$\Der(R)$.
\smallbreak
\item
\label{it:D(R)-compl-2}%
The functor~$f^*\colon \Der(R)\to \Der(\hat{R}_{})$ restricts to an equivalence $\Der_Y(R)\isoto \Der_{Y'}(\hat{R}_{})$.
\smallbreak
\item
\label{it:D(R)-compl-2'}%
The completed extension-of-scalars~$\hat f^*\colon \Der(R)^{\wedge}_{Y}\to \Der(\hat{R}_{})^{\wedge}_{Y'}$ is an equivalence.
\smallbreak
\item
\label{it:D(R)-compl-3}%
Every sequence $\underline{s}$ that generates~$I$ is Koszul-complete (\Cref{Def:I-kos}).
\smallbreak
\item
\label{it:D(R)-compl-4}%
There exists a Koszul-complete sequence $\underline{s}$ that generates~$I$.
\end{enumerate}
\end{Thm}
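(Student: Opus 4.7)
The plan is to apply the abstract \Cref{Thm:tt-compl} and \Cref{Cor:tt-compl} to the geometric tt-functor $f^*\colon\cT=\Der(R)\to\cS=\Der(\hat{R}_{})$, and then to identify abstract Condition~\eqref{it:tt-compl-4} with Koszul-completeness. First I would verify the two hypotheses on~$\cS$. For~($\cS$\ref{it:S1}), \Cref{Rem:fg-comp-is-comp} says $\hat{R}_{}$ is classically $I'$-complete, and \Cref{Prop:classical=>derived} then upgrades this to derived $Y'$-completeness, so $\unit_\cS\cong\hat{\unit}_{Y'}$ in~$\Der(\hat{R}_{})$. Hypothesis~($\cS$\ref{it:S2}) is immediate, since $\cS^c=\Dperf(\hat{R}_{})=\thick(\hat{R}_{})=\thick(f^*R)$ sits inside the thick closure of~$f^*(\cT^c)$.

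With those hypotheses in place, \Cref{Thm:tt-compl} and \Cref{Cor:tt-compl} give the equivalence of their conditions~\eqref{it:tt-compl-1}, \eqref{it:tt-compl-3}, \eqref{it:tt-compl-3'} and~\eqref{it:tt-compl-4}. Translating to our setting, $f_*(\unit_\cS)$ is just $\hat{R}_{}$ viewed as a complex over~$R$ via restriction of scalars, so abstract~\eqref{it:tt-compl-1} matches our~\eqref{it:D(R)-compl-1}; abstract~\eqref{it:tt-compl-3} is verbatim our~\eqref{it:D(R)-compl-2}; and abstract~\eqref{it:tt-compl-3'} is verbatim our~\eqref{it:D(R)-compl-2'}. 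Hence the first three conditions of the present theorem are already mutually equivalent.

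For the bridge to Koszul-completeness, I would use abstract condition~\eqref{it:tt-compl-4} with the test object $k:=\kos_R(\underline{s})$. For any generating sequence $\underline{s}=(s_1,\ldots,s_r)$ of~$I$, this $k$ is compact in $\Der(R)$ with $\supp(k)=V(s_1,\ldots,s_r)=Y$, so it is a legitimate test object. Because $k$ is a bounded complex of finitely generated free $R$-modules, $f^*(k)=k\otimes_R\hat{R}_{}$ on the nose, and $f_*f^*(k)$ is that same complex viewed over~$R$; the unit of adjunction $k\to f_*f^*(k)$ is therefore the canonical map $\kos_R(\underline{s})\to \kos_R(\underline{s})\otimes_R\hat{R}_{}$ of \Cref{Def:I-kos}, which is a quasi-isomorphism precisely when~$\underline{s}$ is Koszul-complete. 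Since abstract~\eqref{it:tt-compl-4} for \emph{some} qualifying~$k$ is equivalent by \Cref{Thm:tt-compl} to~\eqref{it:tt-compl-1}, and hence forces~\eqref{it:tt-compl-4} for \emph{every} qualifying~$k$ (in particular for every Koszul complex of every generating sequence of~$I$), we obtain the equivalence of our~\eqref{it:D(R)-compl-4} (\emph{some} sequence Koszul-complete) with our~\eqref{it:D(R)-compl-3} (\emph{every} sequence Koszul-complete) and with the preceding conditions. The only step needing any care is the identification of the unit map with the Koszul-completeness map, which is unproblematic because the Koszul complex is perfect.
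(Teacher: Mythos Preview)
Your proposal is correct and follows essentially the same route as the paper: apply \Cref{Thm:tt-compl} and \Cref{Cor:tt-compl} to $f^*\colon\Der(R)\to\Der(\hat R)$, verify~($\cS$\ref{it:S1}) via \Cref{Rem:fg-comp-is-comp} and \Cref{Prop:classical=>derived} and~($\cS$\ref{it:S2}) via $\hat R=f^*(R)$, and take $k=\kos_R(\underline{s})$ as the test object in abstract condition~\eqref{it:tt-compl-4}. The paper's proof is terser and simply cites~\cite{Thomason97} for $\supp(\kos_R(\underline{s}))=Y$, leaving implicit the identification of the unit $\eta_k$ with the Koszul-completeness map and the passage between ``some'' and ``every'' generating sequence that you spell out.
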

\begin{proof}
It suffices to apply \Cref{Thm:tt-compl} and \Cref{Cor:tt-compl} to~$\cT=\Der(R)$ and~$\cS=\Der(\hat{R}_{})$ and $f^*\colon\cT\to \cS$ the extension-of-scalars
and to the object~$k=\kos(\underline{s})$, which generates~$\cT^c_Y$ by \cite{Thomason97}.
The two hypotheses~($\cS$\ref{it:S1}) and~($\cS$\ref{it:S2}) hold. The latter is obvious since $\hat{R}_{}=f^*(R)$ generates~$\cS^c=\Dperf(\hat{R}_{})$. The former holds by \Cref{Prop:classical=>derived} since~$\hat{R}_{}$ is $I'$-complete, where $I'=I\hat{R}_{}$ is finitely generated (\cref{Rem:fg-comp-is-comp}).
\end{proof}

\begin{Rem}
\label{Rem:Kos-complete}%
Following up on~\Cref{Rem:k-complete}, we see from \Cref{Thm:D(R)-completion} that once we know that an ideal $I$ is generated by a Koszul-complete sequence then every other sequence of generators is automatically Koszul-complete. In fact, the same holds for any sequence defining the same closed subset~$Y$.
\end{Rem}

\begin{Cor}
If $R$ is noetherian then we have tt-equivalences on {\Ytorsion} $\Der_Y(R)\isoto \Der_{Y'}(\hat{R})$ and on~{\Ycomplete} $\Der(R)^\wedge_Y\simeq\Der(\hat{R}_{})^\wedge_{Y'}$ subcategories.
\end{Cor}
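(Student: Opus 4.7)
The plan is essentially a one-line invocation of the two main results already in this section: \Cref{prop:noeth-koszul} and \Cref{Thm:D(R)-completion}.

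First I would note that since $R$ is noetherian, the ideal $I$ cutting out $Y=V(I)$ is automatically finitely generated, so we are in the setting of \Cref{Thm:D(R)-completion}, and we may pick any finite generating sequence $\underline{s}=(s_1,\ldots,s_r)$ for $I$. By \Cref{prop:noeth-koszul}, this sequence is Koszul-complete, so condition~\eqref{it:D(R)-compl-4} of \Cref{Thm:D(R)-completion} holds. Invoking the equivalence \eqref{it:D(R)-compl-4}$\then$\eqref{it:D(R)-compl-2} then gives the tt-equivalence $\Der_Y(R)\isoto \Der_{Y'}(\hat{R})$ on torsion subcategories, and \eqref{it:D(R)-compl-4}$\then$\eqref{it:D(R)-compl-2'} gives the tt-equivalence $\Der(R)^\wedge_Y\simeq \Der(\hat{R})^\wedge_{Y'}$ on complete subcategories.

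There is no real obstacle since all the work has already been done: \Cref{prop:noeth-koszul} verified the Koszul-completeness hypothesis using noetherian facts (finite generation of homology of the Koszul complex, annihilation by $I$, and flatness of $\hat{R}$ over~$R$), while \Cref{Thm:D(R)-completion} packaged the general criterion. The corollary is just the specialization of the latter under the hypothesis established by the former.
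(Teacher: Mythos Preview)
Your proposal is correct and matches the paper's own proof essentially verbatim: the paper simply says the corollary is an immediate consequence of \Cref{prop:noeth-koszul} and \Cref{Thm:D(R)-completion}. Your added remark that noetherianity makes $I$ automatically finitely generated is a reasonable clarification but not strictly needed, since the standing hypothesis of the section already assumes~$I$ is finitely generated.
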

\begin{proof}
This is an immediate consequence of \cref{prop:noeth-koszul} and \cref{Thm:D(R)-completion}.
\end{proof}


\section{Dualizable objects in the complete case}
\label{sec:dualizables-in-TY}%


We keep our setup: $R$ is a commutative ring and $Y\coloneqq V(I)\subset\Spec(R)$ for a finitely generated ideal~$I$.
The goal of this technical section is to characterize the dualizable objects in the derived category~$\Der_Y(R)$ of complexes supported on~$Y$, when $R\cong\hat{R}$ is classically $I$-adically complete.
It will be achieved in \cref{Thm:dualizable-complete}.
Most of the discussion holds for any commutative ring~$R$.

\begin{Rem}
\label{Rem:H0}%
We use homological indexing for complexes and view $R$-modules as complexes concentrated in degree zero.
We denote the (left-derived) tensor product in~$\cT=\Der(R)$ by~$\Lotimes$ and write $\otimes_R$ for the ordinary tensor product of $R$-modules.
Recall that if $t$ and~$u$ are right-bounded complexes, say, $t$ belongs to~$\Der_{\ge a}(R):=\SET{t\in\Der(R)}{\rmH_i(t)=0\textrm{ for all }i<a}$ and~$u\in\Der_{\ge b}(R)$, then $t\otimes u\in\Der_{\ge a+b}(R)$ is also right-bounded and the \emph{rightmost} homology modules satisfy
\[
\rmH_{a+b}(t\otimes u)\cong \rmH_{a}(t)\otimes_R \rmH_{b}(u)
\]
by the usual canonical truncations and right-exactness of~$\otimes_R$.
\end{Rem}

\begin{Rec}
\label{Rem:Mittag-Leffler}%
Let $\cdots \to t_{n+1}\xto{f_{n+1}} t_n \xto{f_{n}} t_{n-1}\to \cdots$ be a sequence of morphisms in a triangulated category~$\cT$, indexed by~$n\in\bbN$.
If $u\in\cT$ is an object such that every map $\Homcat{T}(\Sigma u,f_{n})\colon \Homcat{T}(\Sigma u,t_{n})\to \Homcat{T}(\Sigma u,t_{n-1})$ is surjective,
then there is a canonical isomorphism
\[
\Homcat{T}(u,\holim_n t_n)\cong \lim_n\Homcat{T}(u,t_n).
\]
This a standard Mittag-Leffler argument, combined with applying the homological functor $\Homcat{T}(u,-)$ to the exact triangle
\begin{equation}
\label{eq:holim}%
\holim_n t_n \to \prod_n t_n\xto{1-\tau} \prod_n t_n \to \Sigma \holim_n t_n
\end{equation}
that defines the homotopy limit. (As usual, $\tau$ is the unique map to the product such that~$\proj_n\circ\tau=f_{n+1}\circ \proj_{n+1}$ for all~$n\in\bbN$.)
\end{Rec}

We also remind the reader of the standard trick of lifting idempotents:

\begin{Prop}
\label{Prop:lift-proj}%
We can lift finitely generated projective modules, up to isomorphism, along the ring quotient~$\hat{R}_{}\onto R/I$.
\end{Prop}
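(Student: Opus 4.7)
The plan is to reduce the statement to the classical lifting-of-idempotents trick inside the complete ring $\hat R$. Any finitely generated projective $R/I$-module $P$ is the image of an idempotent matrix $e\in M_n(R/I)$ acting on $(R/I)^n$, for some $n\ge 0$. The task is therefore to lift $e$ to an idempotent $\tilde e\in M_n(\hat R)$; then $\tilde P := \tilde e\,\hat R^n$ is finitely generated projective over $\hat R$, and reducing modulo $I\hat R$ recovers $P$, using the identification $\hat R/I\hat R\cong R/I$ which is valid because $I$ is finitely generated.

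The lift of $e$ is produced by a Hensel-style iteration. I would pick any set-theoretic lift $e_0\in M_n(\hat R)$ of $e$ and define recursively $e_{k+1} := 3e_k^2 - 2e_k^3$. Writing $a_k := e_k^2 - e_k$, all the $e_k$ and $a_k$ lie in a commutative subring of $M_n(\hat R)$, and a short computation gives $a_{k+1} = -3a_k^2 + 4a_k^3$ together with $e_{k+1}-e_k = (1-2e_k)\,a_k$. Since $a_0\in M_n(I\hat R)$, an easy induction yields $a_k\in M_n\bigl((I\hat R)^{2^k}\bigr)$, and hence $(e_k)_{k\in\bbN}$ is a Cauchy sequence in the $I\hat R$-adic topology on $M_n(\hat R)$.

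By \Cref{Rem:fg-comp-is-comp}, the ring $\hat R$ is classically complete with respect to the finitely generated ideal $I\hat R$, and hence so is $M_n(\hat R)$. The sequence $(e_k)$ therefore converges to a limit $\tilde e\in M_n(\hat R)$, which is idempotent (because $a_k\to 0$) and reduces to $e$ modulo $I\hat R$, as required. I do not expect a real obstacle here: the heart is a purely formal iteration, and everything hinges on \Cref{Rem:fg-comp-is-comp}, which guarantees both that $\hat R$ is complete along $I\hat R$ and that $\hat R/I\hat R\cong R/I$; with these two inputs in hand the conclusion is immediate.
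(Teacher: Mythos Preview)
Your argument is correct and follows the same strategy as the paper: reduce to lifting an idempotent matrix from $M_n(R/I)$ to $M_n(\hat R)$. The only cosmetic difference is in how the lift is produced: the paper lifts stepwise through the tower $M_n(R/I^{m+1})\onto M_n(R/I^m)$ (nilpotent kernel, so idempotents lift) and then reads off the result in $\hat R=\lim_m R/I^m$, whereas you run the explicit Newton iteration $e_{k+1}=3e_k^2-2e_k^3$ inside $M_n(\hat R)$ and invoke \Cref{Rem:fg-comp-is-comp} for convergence; both are standard and interchangeable.
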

\begin{proof}
It suffices to show that every idempotent matrix in~$\Mat_\ell(R/I)$ lifts to an idempotent matrix in~$\Mat_{\ell}(\hat{R}_{})$, for any~$\ell\ge1$. For each $n\ge 1$ the ring homomorphism
\[
\Mat_{\ell}(R/I^{n+1})\onto \Mat_{\ell}(R/I^n)
\]
has nilpotent kernel, thus idempotents lift.
Hence we can construct the desired idempotent in~$\lim_n\Mat_{\ell}(R/I^n)\cong\Mat_{\ell}(\lim_n R/I^n)=\Mat_{\ell}(\hat{R}_{})$.
\end{proof}


After these recollections, we gather in one place all the notation we shall use.

\begin{Not}
\label{Not:general-completion}%
Take $s_1,\ldots,s_r\in R$ that generate~$I$. Let $n\ge1$.
\begin{enumerate}[\rm(1)]
\item
Write $\In$ for~$\langle s_1^n,\ldots,s_r^n\rangle$. Of course $I^{(1)}=I$ and $\In\subseteq\Inm$.
\smallbreak
\item
\label{it:not-kos}%
We denote the Koszul complex for~$(s_1^n,\ldots,s_r^n)$ by
\[
\quad \kn:=\otimes_{i=1}^r\cone(s_i^n)=\otimes_{i=1}^r(\cdots\to 0\to R\xto{s_i^n}R\to 0\cdots)
\]
with non-zero entries in nonnegative homological degrees. (Each factor lives in degrees~1 and~0 and the product ranges between degree~$r$ and~$0$.)
We have maps~$q_n\colon \kn\to \knm$ given on each tensor factor $\cone(s_i^n)\to \cone(s_i^{n-1})$ by
\begin{equation*}
\quad\vcenter{\xymatrix@R=1em{
\cdots 0\ar[r]
& R \ar[r]^-{s_i^{n}} \ar[d]_-{s_i}
& R \ar[r] \ar@{=}[d]
& 0 \cdots
\\
\cdots 0\ar[r]
& R \ar[r]^-{s_i^{n-1}}
& R \ar[r]
& 0 \cdots
}}
\end{equation*}
\smallbreak
\item
There is a canonical map $p_n\colon \kn\to R/\In$ given by the projection $R\onto R/\In$ in degree zero
and the obvious square with the above~$q_n$ commutes:
\begin{equation}
\label{eq:pq-comm}%
\quad\vcenter{\xymatrix@C=2em@R=2em{
\kn \ar[r]^-{p_n} \ar[d]_-{q_n}
& R/\In  \ar[d]^-{\mathrm{can}}
\\
\knm \ar[r]^-{p_{n-1}}
& R/\Inm.
}}
\end{equation}
We define the complex $\ln\in\Der(R)$ as the homotopy fiber of~$p_n$, \ie by the exact triangle in~$\Der(R)$:
\begin{equation}
\label{eq:ln-triangle}%
\quad\ln \to \kn \xto{p_n} R/\In \to \Sigma \ln.
\end{equation}
Since $\rmH_0(\kn)=R/\In$, the long exact sequence in homology tells us that
\[
\ln\in\Der_{\ge 1}(R).
\]
\smallbreak
\item
We let~$\ion\colon \Spec(R/\In)\hook \Spec(R)$ denote the closed immersion associated to the ring surjection~$R\onto R/\In$ and let~$\ion^*\colon \Der(R)\to \Der(R/\In)$ denote the corresponding extension-of-scalars.
Since its right adjoint, restriction-of-scalars~$(\ion)_*$, preserves homology, we get via the projection formula
\[
\rmH_i(\ion^*(t))\cong\rmH_i((\ion)_*\ion^*(t))\cong\rmH_i((R/\In)\Lotimes t)
\]
for every $i\in \bbZ$ and~$t\in\Der(R)$.
By~\Cref{Rem:H0}, the tt-functor $\ion^*$ preserves right-boundedness: $\ion^*(\Der_{\ge 0}(R))\subseteq\Der_{\ge 0}(R/\In)$. Moreover, when~$t\in\Der_{\ge 0}(R)$ is concentrated in non-negative degrees we obtain
\begin{equation}
\label{eq:H0}%
\rmH_0(\ion^*(t))\cong
\rmH_0((R/\In)\Lotimes t)\cong
(R/\In)\otimes_R\rmH_0(t)\cong \rmH_0(t)\big/\In\cdot\rmH_0(t).
\end{equation}
\end{enumerate}
\end{Not}

We now give a series of preparatory lemmas.
Note that the Koszul complexes~$\kn$ of \Cref{Not:general-completion}\,\eqref{it:not-kos} are dualizable
and the maps~$q_n\colon \kn\to\knm$ described in~\eqref{it:not-kos} correspond to
maps~$q_n^\vee\colon (\knm)^\vee \to (\kn)^\vee$.
\begin{Prop}
\label{Prop:eY}%
The homotopy colimit of the sequence
\[
(k^{(1)})^\vee\xto{q_2^\vee} \cdots \xto{q_n^\vee} (\kn)^\vee\xto{q_{n+1}^\vee} (\knp)^\vee\to\cdots
\]
in~$\Der(R)$ is isomorphic to the left idempotent~$\eY$.
\end{Prop}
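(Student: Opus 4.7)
The plan is to reduce to the case $r=1$ by exploiting the tensor decompositions of both sides, and then to recognize the homotopy colimit as the homotopy fiber of the canonical localization map $R\to R[1/s]$.

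For the reduction, I invoke \Cref{Rem:ef-in-DR} to factor $\eY\cong\ee_{V(s_1)}\otimes\cdots\otimes\ee_{V(s_r)}$, and since duality commutes with tensor in a rigidly-compactly generated tt-category, $(\kn)^\vee\cong\bigotimes_{i=1}^{r}(\cone(s_i^n))^\vee$. Because $-\otimes X$ preserves coproducts, hence sequential homotopy colimits, for every $X\in\Der(R)$, and since the diagonal inclusion $\bbN\hook\bbN^r$ is cofinal, we obtain
\[
\hocolim_n (\kn)^\vee \cong \bigotimes_{i=1}^{r}\hocolim_n(\cone(s_i^n))^\vee.
\]
Thus it suffices to prove $\hocolim_n(\cone(s^n))^\vee\cong\ee_{V(s)}$ for a single element $s\in R$.

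For this monogenic case, identify $(\cone(s^n))^\vee$ with $\Sigma^{-1}\cone(s^n)$, the homotopy fiber of the multiplication map $s^n\colon\unit\to\unit$. As $n$ varies, the rotated standard triangles $\Sigma^{-1}\cone(s^n)\xto{\alpha_n}\unit\xto{s^n}\unit$ assemble into a morphism of sequential diagrams
\[
\big(\unit\xrightarrow{=}\unit\xrightarrow{=}\cdots\big)\xrightarrow{(s^n)_n}\big(\unit\xto{s}\unit\xto{s}\cdots\big)
\]
in $\Der(R)$; a chain-level inspection using the explicit description of $q_n$ in \Cref{Not:general-completion}\,\eqref{it:not-kos} shows that the induced transitions on level-$n$ fibers coincide with $q_n^\vee$. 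Since sequential hocolim of triangles is a triangle in $\Der(R)$, and since $\hocolim_n(\unit\xto{s}\unit\xto{s}\cdots)\cong R[1/s]$, taking hocolim yields a distinguished triangle
\[
\hocolim_n(\cone(s^n))^\vee \to \unit \to R[1/s]
\]
in which the right-hand arrow is the canonical localization $R\to R[1/s]$. Comparing with the explicit idempotent triangle for $V(s)$ recalled in \Cref{Exa:mono} and applying its uniqueness (\Cref{Rec:idempotents}), we conclude $\hocolim_n(\cone(s^n))^\vee\cong\ee_{V(s)}$, which completes the proof.

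The only subtle point is verifying, at the chain level, that the transitions on fibers coincide with $q_n^\vee$ and that the induced map $\unit\to R[1/s]$ is indeed the canonical localization. Both reduce to short computations on the two-term complexes $R\xto{s^n}R$, requiring care only with sign conventions when identifying duals with shifted Koszul complexes; no real obstacle is involved.
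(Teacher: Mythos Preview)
Your proposal is correct and follows essentially the same route as the paper: reduce to $r=1$ via the tensor factorization $\eY\cong\ee_{V(s_1)}\otimes\cdots\otimes\ee_{V(s_r)}$, then in the monogenic case identify the homotopy colimit with $\ee_{V(s)}$ using the idempotent triangle $\ee_{V(s)}\to\unit\to R[1/s]$. Your monogenic step is phrased more categorically (passing to hocolims of the tower of fiber sequences and invoking uniqueness of the idempotent triangle) where the paper simply writes out the explicit two-term complexes and reads off the colimit, and you spell out the cofinality detail the paper leaves implicit, but the strategy is the same.
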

\begin{proof}
This is well-known.
Since $\eY=\ee_{V(s_1)}\otimes\cdots\otimes\ee_{V(s_r)}$, it is possible to reduce to the case $r=1$.
For $s\in R$, the right-idempotent~$\ff_{V(s)}$ is simply~$R[1/s]$ in degree zero, hence $\ee_{V(s)}$ is~$(0\to R\to R[1/s]\to 0)$ in homological degrees~0 and~$-1$, which is easily seen to be the homotopy colimit of
\[
\xymatrix@R=1em{
\cone(s)^\vee= \ar@<-.5em>[d]
&& \cdots 0 \ar[r]
& R \ar[r]^-{s} \ar@{=}[d]
& R \ar[r] \ar[d]^-{s}
& 0 \cdots
\\
\cone(s^2)^\vee= \ar@<-.5em>[d]
&& \cdots 0 \ar[r]
& R \ar[r]^-{s^2} \ar@{=}[d]
& R \ar[r] \ar[d]^-{s}
& 0 \cdots
\\
\cone(s^3)^\vee= \ar@<-.5em>[d]
&& \cdots 0 \ar[r]
& R \ar[r]^-{s^3} \ar@{=}[d]
& R \ar[r] \ar[d]^-{s}
& 0 \cdots
\\
{\vdots}\quad
&&&
{\vdots}
&{\vdots}
}
\]
\vskip-\baselineskip
\end{proof}
\begin{Cor}
\label{Cor:[et]}%
Let~$t\in\Der(R)$ be an object.
Then the homotopy limit of the sequence
\[
\cdots \to \knp\Lotimes t \xto{q_{n+1}\Lotimes 1} \kn\Lotimes t\xto{q_n\Lotimes 1} \cdots \xto{q_2\Lotimes1} k^{(1)}\otimes t
\]
is isomorphic to~$[\eY,t]$.
\end{Cor}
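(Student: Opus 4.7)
The plan is to apply the internal-hom functor $[-,t]$ to the identification $\eY \simeq \hocolim_n (k^{(n)})^\vee$ from \Cref{Prop:eY} and check that it converts the relevant homotopy colimit into the desired homotopy limit. Since each $k^{(n)}$ is a bounded complex of finitely generated free $R$-modules, it is dualizable in $\cT=\Der(R)$, so the dual-tensor adjunction gives a natural isomorphism
\[
[(k^{(n)})^\vee,t] \cong k^{(n)} \Lotimes t,
\]
and this identification is compatible with the structure maps, sending $q_n^\vee$ to $q_n \Lotimes 1$ up to the usual evaluation isomorphisms.

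Next, I would use the defining triangle of the homotopy colimit: applying the contravariant exact functor $[-,t]$ to
\[
\coprod_n (k^{(n)})^\vee \xto{1-\tau} \coprod_n (k^{(n)})^\vee \to \hocolim_n (k^{(n)})^\vee \to \Sigma \coprod_n (k^{(n)})^\vee
\]
turns coproducts into products (since $[-,t]$ sends coproducts in the first variable to products) and produces exactly the defining triangle~\eqref{eq:holim} for $\holim_n [(k^{(n)})^\vee, t]$. Combined with the previous isomorphism and \Cref{Prop:eY}, this yields
\[
[\eY, t] \cong [\hocolim_n (k^{(n)})^\vee, t] \cong \holim_n [(k^{(n)})^\vee, t] \cong \holim_n k^{(n)} \Lotimes t,
\]
as desired.

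The only mildly delicate point is checking that the isomorphism $[(k^{(n)})^\vee,t] \cong k^{(n)} \Lotimes t$ is natural in $n$ so that it intertwines the maps induced by $q_n^\vee$ with the maps $q_n \Lotimes 1$. This is routine from the fact that dualization is a (contravariant) tensor-functor on dualizables and that the evaluation/coevaluation are natural; there is no serious obstacle. No Mittag-Leffler argument is needed here — we just run the holim/hocolim triangles head-on — though \Cref{Rem:Mittag-Leffler} would be the tool to unwind $\Homcat{T}(u, [\eY,t])$ further into an actual inverse limit of abelian groups if one wanted that.
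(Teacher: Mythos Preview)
Your proof is correct and follows exactly the same approach as the paper: apply $[-,t]$ to \Cref{Prop:eY} and use $[(\kn)^\vee,t]\cong \kn\Lotimes t$. You have simply spelled out the details (the hocolim-to-holim conversion and naturality) that the paper leaves implicit in its one-line proof.
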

\begin{proof}
Apply~$[-,t]$ to \Cref{Prop:eY} and use $[(\kn)^\vee,t]\cong \kn\Lotimes t$.
\end{proof}

We now turn to the objects~$\ln$ defined by the exact triangles~\eqref{eq:ln-triangle}.
\begin{Lem}
\label{Lem:An}%
For every $n\ge 1$ we have $\In\cdot \rmH_i(\ln)=0$ for all~$i\ge1$.
\end{Lem}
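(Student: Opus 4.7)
The plan is to reduce the claim on $\rmH_i(\ln)$ to the corresponding statement on $\rmH_i(\kn)$, using the long exact sequence of the defining triangle~\eqref{eq:ln-triangle}. Since $R/\In$ is concentrated in degree zero, the triangle $\ln\to \kn\to R/\In\to\Sigma\ln$ yields the exact sequence

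\[
\rmH_{i+1}(R/\In) \to \rmH_i(\ln)\to \rmH_i(\kn) \to \rmH_i(R/\In)
\]

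and for $i\ge 1$ both outer terms vanish. Hence the induced map $\rmH_i(\ln)\to \rmH_i(\kn)$ is an isomorphism for all $i\ge 1$, and it suffices to check that $\In\cdot \rmH_i(\kn)=0$.

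The latter is the standard Koszul fact already invoked in the proof of \Cref{prop:noeth-koszul}: for each $i=1,\ldots,r$, multiplication by $s_i^n$ on $\cone(s_i^n)$ is null-homotopic (a contracting homotopy is given by the identity on the top copy of~$R$). Tensoring over~$R$, this null-homotopy extends to show that each $s_i^n$ acts null-homotopically on $\kn=\otimes_{j=1}^{r}\cone(s_j^n)$, hence acts as zero on every~$\rmH_i(\kn)$. Since the ideal $\In=\langle s_1^n,\ldots,s_r^n\rangle$ is generated by these elements, we get $\In\cdot \rmH_i(\kn)=0$ for every~$i$, and in particular for $i\ge 1$.

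Combining the two observations, $\In\cdot \rmH_i(\ln)\cong \In\cdot \rmH_i(\kn)=0$ for all $i\ge 1$, as required. No real obstacle is anticipated: the argument is essentially a two-line chase in the long exact sequence combined with the Koszul null-homotopy, and requires nothing beyond \Cref{Not:general-completion} and the remark that $\ln\in\Der_{\ge 1}(R)$.
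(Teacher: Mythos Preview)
Your proof is correct and follows essentially the same approach as the paper: identify $\rmH_i(\ln)\cong\rmH_i(\kn)$ for $i\ge 1$ via the long exact sequence of~\eqref{eq:ln-triangle}, then use the null-homotopy of multiplication by~$s_i^n$ on~$\cone(s_i^n)$ to conclude that $\In$ kills the homology of the Koszul complex.
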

\begin{proof}
As the Koszul complex $\kn$ and the module~$R/\In$ have the same homology in degree~0 and as $R/I$ is concentrated in degree~0, we have~$\rmH_i(\ln)=\rmH_i(\kn)$ for all~$i\ge 1$. But the map~$s_i^n$ is zero on~$\cone(s_i^n)$ and \textsl{a fortiori} on~$\otimes_{i}\cone(s_i^n)=\kn$. The induced map in homology is then zero as well.
\end{proof}

\begin{Lem}
\label{Lem:1}%
If $t\in \Der_{\ge0}(R)$ satisfies $\rmH_0((R/\In)\Lotimes t)=0$
then $\rmH_0(\kn\Lotimes t )=0$ as well and $\rmH_1(p_n\Lotimes 1 )\colon \rmH_1(\kn\Lotimes t )\to \rmH_1((R/\In)\Lotimes t)$ is an isomorphism.
\end{Lem}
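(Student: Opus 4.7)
The plan is to apply $(-)\Lotimes t$ to the defining triangle~\eqref{eq:ln-triangle} of~$\ln$ and chase the resulting long exact sequence in homology, using \Cref{Lem:An} at the one non-formal step.

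First I would observe that since $\ln\in\Der_{\ge 1}(R)$ and $t\in\Der_{\ge 0}(R)$, the tensor product $\ln\Lotimes t$ lies in~$\Der_{\ge 1}(R)$ by the right-boundedness discussion in~\Cref{Rem:H0}. In particular $\rmH_0(\ln\Lotimes t)=0$. Tensoring the triangle $\ln\to\kn\xto{p_n} R/\In\to\Sigma\ln$ with~$t$ and looking at the tail of the associated long exact sequence:
\[
0=\rmH_0(\ln\Lotimes t)\to \rmH_0(\kn\Lotimes t)\to \rmH_0((R/\In)\Lotimes t)=0
\]
gives $\rmH_0(\kn\Lotimes t)=0$, which is the first claim.

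For the isomorphism, I would go one step further in the long exact sequence:
\[
\rmH_1(\ln\Lotimes t)\to \rmH_1(\kn\Lotimes t)\xto{\rmH_1(p_n\Lotimes 1)} \rmH_1((R/\In)\Lotimes t)\to \rmH_0(\ln\Lotimes t)=0.
\]
Surjectivity is already visible on the right, so the whole game reduces to proving $\rmH_1(\ln\Lotimes t)=0$. Here I would invoke the rightmost-homology formula of \Cref{Rem:H0}: since $\ln\Lotimes t\in\Der_{\ge 1}(R)$ has its smallest homological degree at~$1$, we get $\rmH_1(\ln\Lotimes t)\cong \rmH_1(\ln)\otimes_R\rmH_0(t)$.

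The main (and only) non-formal step is to make this tensor product vanish. For this I would combine two facts: on the one hand \Cref{Lem:An} tells us that $\In\cdot \rmH_1(\ln)=0$; on the other hand the hypothesis $\rmH_0((R/\In)\Lotimes t)=0$, together with $t\in\Der_{\ge 0}(R)$ and the right-exactness recalled in~\Cref{Rem:H0}, forces $\rmH_0((R/\In)\Lotimes t)\cong \rmH_0(t)/\In\cdot\rmH_0(t)=0$, i.e.\ $\rmH_0(t)=\In\cdot\rmH_0(t)$. Writing any elementary tensor $x\otimes y$ with $x\in\rmH_1(\ln)$ and $y=\sum a_iy_i$ with $a_i\in\In$, we get $x\otimes y=\sum a_ix\otimes y_i=0$. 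Hence $\rmH_1(\ln)\otimes_R\rmH_0(t)=0$, and the isomorphism follows.
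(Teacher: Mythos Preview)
Your proof is correct and follows essentially the same approach as the paper: tensor the defining triangle~\eqref{eq:ln-triangle} with~$t$, use \Cref{Rem:H0} to identify $\rmH_1(\ln\Lotimes t)\cong\rmH_1(\ln)\otimes_R\rmH_0(t)$, and kill this module by combining \Cref{Lem:An} with the consequence $\rmH_0(t)=\In\cdot\rmH_0(t)$ of the hypothesis. The only cosmetic difference is that the paper first establishes $\ln\Lotimes t\in\Der_{\ge2}(R)$ and then reads off both the $\rmH_0$ and $\rmH_1$ statements at once, whereas you extract the $\rmH_0$ conclusion earlier from $\ln\Lotimes t\in\Der_{\ge1}(R)$ alone.
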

\begin{proof}
Since $t\in\Der_{\ge 0}(R)$, we can use~\eqref{eq:H0} and the assumption $\rmH_0((R/\In)\Lotimes t)=0$ to deduce that $\rmH_0(t)=\In\cdot\rmH_0(t)$.
It follows that $\rmH_1(\ln)\otimes_R \rmH_0(t)=0$ since $\In\cdot \rmH_1(\ln)=0$ by \Cref{Lem:An}.
Since $\ln\in\Der_{\ge1}(R)$ and~$t\in\Der_{\ge 0}(R)$, we know that~$\ln\Lotimes t\in\Der_{\ge1}(R)$ and, by \Cref{Rem:H0} again, the rightmost homology is $\rmH_1(\ln\Lotimes t)\cong \rmH_1(\ln)\otimes_R \rmH_0(t)$.
As the latter vanishes, we have in fact
\[
\ln\Lotimes t\in\Der_{\ge2}(R).
\]
Plugging this into the homology long exact sequence associated to the exact triangle~\eqref{eq:ln-triangle}~$\Lotimes \,t$, we get isomorphisms $\rmH_i(\kn\Lotimes t )\isoto \rmH_i((R/\In)\Lotimes t)$ for $i=0,1$.
\end{proof}

\begin{Lem}
\label{Lem:2}%
Let $N\subset R$ be a nilideal and $a\in\bbZ$. Let $c\in\Dperf(R)$ be a perfect complex and~$d\in\Dperf(R/N)$ its image modulo~$N$.
Suppose that $d\in\Der_{\ge a}(R/N)$ has homology only in degree above~$a$. Then $c\in\Der_{\ge a}(R)$ has the same right-boundedness.
\end{Lem}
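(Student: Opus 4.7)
The plan is to represent $c$ by a strictly bounded complex $P_\sbull$ of finitely generated projective $R$-modules and then, working from the bottom up, to inductively strip away any degree that lies below $a$. The driving observation is that the hypothesis on $d = c \otimes_R R/N$ will force the lowest differential of $P_\sbull$ to be split surjective, so that an entire bottom term can be discarded without changing the homotopy type of the complex.

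Concretely, let $b := \min\{\,i \mid P_i \neq 0\,\}$ and suppose for contradiction that $b < a$. Since $P_\sbull$ vanishes in degrees~$< b$, we have $\rmH_b(d) = P_b/(NP_b + \mathrm{im}\, d_{b+1})$, and this vanishes by our assumption on~$d$. Hence the finitely generated $R$-module $M := P_b/\mathrm{im}\, d_{b+1}$ satisfies $M = NM$. Because every element of the nilideal~$N$ is nilpotent, $N$ lies in the nilradical and hence in the Jacobson radical of~$R$, and Nakayama's lemma forces $M = 0$. Therefore $d_{b+1}\colon P_{b+1} \to P_b$ is surjective, and since $P_b$ is projective this surjection splits: $P_{b+1} \cong P_b \oplus \ker d_{b+1}$ with $\ker d_{b+1}$ itself finitely generated projective as a summand of a finitely generated projective.

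Replacing the bottom segment $P_{b+1} \to P_b \to 0$ by $\ker d_{b+1} \to 0$ in degree~$b+1$ yields a homotopy equivalent complex $P'_\sbull$ concentrated in degrees $\ge b+1$, still a bounded complex of finitely generated projectives and still representing~$c$; its reduction mod~$N$ still represents $d$, so the hypothesis is preserved. Iterating this truncation at most $a - b$ times produces a representative of $c$ concentrated in degrees $\ge a$, which proves $c \in \Der_{\ge a}(R)$. The only technical point to handle is the Nakayama step for nilideals over a possibly non-local ring, but the containment of the nilradical in every maximal ideal makes this immediate; everything else is routine bookkeeping with bounded complexes of projectives.
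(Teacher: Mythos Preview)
Your argument is correct and rests on the same key step as the paper's: the nilideal~$N$ lies in the Jacobson radical, so Nakayama's Lemma applied to the finitely generated module~$M=P_b/\mathrm{im}\,d_{b+1}=\rmH_b(c)$ forces it to vanish once its reduction mod~$N$ does. The paper packages this more directly: rather than choosing a representing complex and iteratively splitting off the bottom term, it simply lets~$i$ be the smallest index with~$\rmH_i(c)\neq 0$, observes that $\rmH_i(c)$ is finitely generated (perfection), and uses the rightmost-homology formula $\rmH_i((R/N)\Lotimes c)\cong (R/N)\otimes_R\rmH_i(c)$ together with Nakayama to conclude $\rmH_i(d)\neq 0$, whence $i\ge a$. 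Your truncation-and-iterate procedure is unnecessary once you have $M=0$, since that already says $\rmH_b(c)=0$; but it does no harm, and the extra concreteness may appeal to some readers.
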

\begin{proof}
	We can assume $c\neq 0$. Let $\rmH_i(c)$ be the rightmost non-zero homology group of~$c$.
We need to show that $i\ge a$.
Since $c$ is perfect, its rightmost homology group~$\rmH_i(c)$ is a finitely generated $R$-module and, since $N$ is a nilideal, Nakayama's Lemma~guarantees that $(R/N)\otimes_R\rmH_i(c)$ remains non-zero.
By \Cref{Rem:H0}, this non-zero module is $\rmH_i((R/N)\Lotimes c)\cong\rmH_i(d)$. Hence $i\ge a$.
\end{proof}

We now turn to a key lemma, where derived completion appears.
\begin{Lem}
\label{Lem:3}%
Let $t\in \Der_{\ge0}(R)$ be such that $\ion^*(t)\in\Der(R/\In)$ is perfect for every~$n\ge1$.
If $\rmH_0((R/I)\Lotimes t)=0$ then $\rmH_0([\eY,t])=0$.
\end{Lem}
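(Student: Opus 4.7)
The plan begins by strengthening the hypothesis from modulo \(I\) to modulo every \(\In\). Since \(I^{rn}\subseteq\In\), the ideal \(I/\In\) is nilpotent in \(R/\In\), so \Cref{Lem:2} applied to \(c=\ion^*(t)\in\Dperf(R/\In)\) with nilideal \(N=I/\In\) and \(a=1\) promotes the input \((R/I)\Lotimes t\in\Der_{\ge1}(R/I)\) (which follows from \(\rmH_0((R/I)\Lotimes t)=0\) and \(t\in\Der_{\ge0}(R)\)) to \(\ion^*(t)\in\Der_{\ge1}(R/\In)\), that is, \(\rmH_0((R/\In)\Lotimes t)=0\) for every \(n\ge 1\). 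Applying \Cref{Lem:1} with \((s_1^n,\ldots,s_r^n)\) in place of \(\underline{s}\) then yields \(\rmH_0(\kn\Lotimes t)=0\) together with an isomorphism \(\rmH_1(p_n\Lotimes 1)\colon\rmH_1(\kn\Lotimes t)\isoto\rmH_1(\ion^*(t))\), natural in \(n\) by the square \eqref{eq:pq-comm}.

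By \Cref{Cor:[et]}, \([\eY,t]\cong\holim_n(\kn\Lotimes t)\). Applying \(\rmH_0=\Hom_{\Der(R)}(R,-)\) to the defining triangle \eqref{eq:holim} of this homotopy limit produces the Milnor exact sequence
\[
0\to\lim\nolimits^1_n\rmH_1(\kn\Lotimes t)\to\rmH_0([\eY,t])\to\lim_n\rmH_0(\kn\Lotimes t)\to 0.
\]
The right term vanishes by paragraph~1, reducing the lemma to showing \(\lim\nolimits^1_n\rmH_1(\ion^*(t))=0\).

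I would then prove that each transition \(\rmH_1(\ion^*(t))\to\rmH_1(\ionm^*(t))\) is surjective, whereupon Mittag--Leffler forces \(\lim^1=0\). Derived-tensoring the short exact sequence \(0\to\Inm/\In\to R/\In\to R/\Inm\to 0\) with the perfect complex \(\ion^*(t)\in\Der(R/\In)\) yields an exact triangle
\[
(\Inm/\In)\Lotimes_{R/\In}\ion^*(t)\to\ion^*(t)\to\ionm^*(t)\to\Sigma(\cdots),
\]
whose long exact sequence in homology contains
\[
\rmH_1(\ion^*(t))\to\rmH_1(\ionm^*(t))\to\rmH_0\bigl((\Inm/\In)\Lotimes_{R/\In}\ion^*(t)\bigr).
\]
Representing \(\ion^*(t)\) by a bounded complex \(P^\bullet\) of finitely generated projective \(R/\In\)-modules (possible by perfectness), the derived tensor coincides with the ordinary one, and right-exactness of tensor gives \(\rmH_0((\Inm/\In)\otimes_{R/\In}P^\bullet)=(\Inm/\In)\otimes_{R/\In}\rmH_0(\ion^*(t))=0\) by paragraph~1. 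Exactness then forces the desired surjectivity, and therefore \(\rmH_0([\eY,t])=0\).

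The main obstacle is the final step, which makes essential use of the fact that each \(\ion^*(t)\) is \emph{perfect}, not merely that some \(\rmH_0\) vanishes: a bounded projective representative is precisely what identifies the derived tensor with the ordinary one and lets the connecting map in the derived-tensor triangle be controlled by the vanishing \(\rmH_0(\ion^*(t))=0\) produced in paragraph~1.
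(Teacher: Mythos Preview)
Your proof is correct and follows the same strategy as the paper: lift the vanishing of $\rmH_0$ from $R/I$ to all $R/\In$ via \Cref{Lem:2}, establish surjectivity of the $\rmH_1$ transition maps, pass to the Koszul tower via \Cref{Lem:1} and the square~\eqref{eq:pq-comm}, and conclude by Mittag--Leffler (your one-shot use of \Cref{Lem:2} with nilideal $I/\In$ is in fact slightly slicker than the paper's induction through $\Inm/\In$). One small point: in your final step the vanishing $\rmH_0\bigl((\Inm/\In)\Lotimes_{R/\In}\ion^*(t)\bigr)=0$ follows directly from $\ion^*(t)\in\Der_{\ge 1}(R/\In)$ and \Cref{Rem:H0} (this is how the paper argues), whereas your right-exactness argument tacitly assumes the projective representative~$P^\bullet$ sits in non-negative degrees---this can be arranged since $\ion^*(t)\in\Der_{\ge 0}(R/\In)$, but is not automatic from perfectness alone.
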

\begin{proof}
We first claim that $\rmH_0(\ion^*(t))=0$ for all~$n\ge 1$.
The $n=1$ case is the hypothesis~$\rmH_0(\iota_{1}^*(t))\cong \rmH_0((R/I)\Lotimes t)=0$.
Let $n\ge 2$ be such that $\rmH_0(\ionm^*(t))=0$.
Write $\bar{R}=R/\In$ and~$N=\Inm/\In\subseteq\bar{R}$. The homomorphism~$\bar{R}\onto \bar{R}/N=R/\Inm$ has nilpotent kernel~$N$ and sends~$\ion^*(t)$ to~$\ionm^*(t)$:
\[
\xymatrix@R=1em{
\ion^*(t) \ar@{|->}[d] & \in & \Dperf(R/\In) =\Dperf(\bar{R}) \quad \ar@<-3em>[d]
\\
\ionm^*(t) & \in & \Dperf(R/\Inm)=\Dperf(\bar{R}/N).
}
\]
Both of these objects are perfect complexes by assumption. We can therefore use \Cref{Lem:2} to conclude that $\rmH_0(\ion^*(t))=0$, which proves the claim by induction.

Our second claim is that the map~$\rmH_1((R/\In)\otimes t)\to \rmH_1((R/\Inm)\otimes t)$ is surjective for every $n\ge 2$. Keeping the notation~$\bar{R}=R/\In$ and~$N=\Inm/\In$ above, the perfect complex $\ion^*(t)$ belongs to~$\Der_{\ge 1}(R/\In)$ by the already proved first claim, \ie the perfect complex~$\ion^*(t)$ has rightmost non-zero homology in degree~one (or higher).
Thus we can invoke~\Cref{Rem:H0} again, for the ring~$\bar{R}$, to get that
\[
\rmH_1(\ionm^*(t))\cong
(\bar{R}/N)\otimes_{\bar{R}}\rmH_1(\ion^*(t)).
\]
Hence the map~$\rmH_1((R/\In)\otimes t)\cong\rmH_1(\ion^*(t))\onto \rmH_1(\ionm^*(t))\cong\rmH_1((R/\Inm)\otimes t)$ is indeed surjective.

Combining with~\eqref{eq:pq-comm} and \Cref{Lem:1} our two claims in degree zero and one also hold if we replace $(R/\In)\Lotimes t$ by~$\kn\Lotimes t$, namely we have
\[
\rmH_0(\kn\Lotimes t)\cong 0
\]
for all~$n\ge 1$ and the maps induced by~$q_n\colon \kn\to \knm$
\[
\rmH_1(\kn\otimes t)\to \rmH_1(\knm\otimes t)
\]
are surjective for all~$n\ge2$.
The latter implies that $\rmH_0(\holim_n(\kn\otimes t))\cong \lim_n \rmH_0(\kn\otimes t)=0$; see~\Cref{Rem:Mittag-Leffler} for $u=R$ in~$\cT=\Der(R)$.
But \Cref{Cor:[et]} tells us that this homotopy limit,~$\holim_n(\kn\otimes t)$, is precisely the object~$[\eY,t]$.
\end{proof}

\begin{Lem}
\label{Lem:[ed]}%
Let $b\ge a$ be integers. Let $d\in\Der(R)$ be such that $\kos(s_1,\ldots,s_r)\otimes d$ has homology concentrated in degrees between~$b$ and~$a$. Then:
\begin{enumerate}[\rm(a)]
\item
\label{it:[ed]-1}%
For every $n\ge 1$, the complex~$\kn\otimes d$ has homology concentrated in degrees between~$b$ and~$a$.
\smallbreak
\item
\label{it:[ed]-2}%
The complex~$[\eY,d]$ has homology concentrated in degrees between~$b$ and~$a-1$.
In particular, $[\eY,d]\in\Der_{\ge a-1}(R)$ is right-bounded.
\end{enumerate}
\end{Lem}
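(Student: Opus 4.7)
The strategy is to reduce (a) to the given hypothesis via an induction on the total Koszul exponent, and then to deduce (b) from the description of $[\eY,d]$ as a homotopy limit (\Cref{Cor:[et]}).

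For part~(a), I will prove the slightly stronger statement that for every tuple $(n_1,\ldots,n_r)$ of positive integers, the complex $k^{(n_1,\ldots,n_r)}\otimes d := \bigotimes_{i=1}^{r}\cone(s_i^{n_i})\otimes d$ has homology concentrated in degrees $[a,b]$; specializing to $n_1=\cdots=n_r=n$ gives the claim for $k^{(n)}\otimes d$. The induction runs over $N := n_1+\cdots+n_r$. The base case $N=r$ (all $n_i=1$) is exactly the hypothesis on $\kos(s_1,\ldots,s_r)\otimes d$. For the inductive step, assume $N>r$, so some $n_i \geq 2$; say $n_1\geq 2$. Applying the octahedral axiom to the factorization $s_1^{n_1}=s_1\cdot s_1^{n_1-1}$ yields an exact triangle in $\Der(R)$
\[
\cone(s_1^{n_1-1})\to \cone(s_1^{n_1})\to \cone(s_1)\to \Sigma\cone(s_1^{n_1-1}).
\]
Tensoring with the perfect complex $\cone(s_2^{n_2})\otimes\cdots\otimes \cone(s_r^{n_r})\otimes d$ produces an exact triangle with outer terms $k^{(n_1-1,n_2,\ldots,n_r)}\otimes d$ and $k^{(1,n_2,\ldots,n_r)}\otimes d$. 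Both have exponent-sum strictly less than $N$ (namely $N-1$ and $N-n_1+1$, the latter being $\leq N-1$ since $n_1\geq 2$), so by the inductive hypothesis each has homology concentrated in $[a,b]$; then so does the middle term by the long exact sequence in homology.

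For part~(b), \Cref{Cor:[et]} identifies $[\eY,d]$ with $\holim_n(k^{(n)}\otimes d)$, which fits in the defining exact triangle~\eqref{eq:holim}
\[
[\eY,d]\to \prod_n(k^{(n)}\otimes d)\xrightarrow{1-\tau}\prod_n(k^{(n)}\otimes d)\to \Sigma[\eY,d].
\]
Since arbitrary products of $R$-modules are exact, homology commutes with these products, and part~(a) shows that the middle complex $\prod_n(k^{(n)}\otimes d)$ has homology concentrated in $[a,b]$. The long exact sequence of the triangle then forces $\rmH_j([\eY,d])=0$ whenever both $\rmH_j$ and $\rmH_{j+1}$ of the product vanish, i.e.\ whenever $j\notin[a-1,b]$; this is precisely the desired range and gives in particular the right-boundedness $[\eY,d]\in\Der_{\geq a-1}(R)$.

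The main subtlety lies in setting up the induction in~(a). A naive induction on the single exponent~$n$ does not close, because the octahedral decomposition of $\cone(s_i^{n})$ inevitably produces `mixed' Koszul complexes with differing exponents at different slots. Stratifying instead by the total exponent $\sum n_i$ allows all such intermediate hybrids to be handled uniformly; everything else reduces to routine triangulated bookkeeping with the $\holim$ triangle.
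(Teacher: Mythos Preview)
Your proof is correct and follows essentially the same approach as the paper: both prove the stronger mixed-exponent statement in~(a) via the extension triangle $\cone(s^{\ell})\to\cone(s^{\ell+1})\to\cone(s)$ and an induction over the exponents, and both deduce~(b) from \Cref{Cor:[et]} together with the long exact sequence of the homotopy-limit triangle~\eqref{eq:holim}. One harmless slip: the object you tensor with in the inductive step need not be a \emph{perfect} complex since $d\in\Der(R)$ is arbitrary, but this is irrelevant because $-\otimes-$ preserves exact triangles in each variable regardless.
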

\begin{proof}
Part~\eqref{it:[ed]-1} follows from the more general fact that $\kos(s_1^{\ell_1},\ldots,s_r^{\ell_r})\otimes d$ has homology concentrated between degrees~$b$ and~$a$, for every $\ell_1,\ldots,\ell_r\ge 1$.
The latter is an immediate induction on the~$\ell_i$ once we observe that $\cone(s^{\ell+1})$ is an extension of~$\cone(s^{\ell})$ and~$\cone(s)$ and since the case $\ell_1=\ldots=\ell_r=1$ is the hypothesis.
Part~\eqref{it:[ed]-2} then follows from \Cref{Cor:[et]} and the homology long exact sequence for the exact triangle defining the homotopy limit; see~\eqref{eq:holim}.
\end{proof}

\begin{Lem}
\label{Lem:i*e}%
Let $n\ge 1$ and $t\in\Der(R)$. The tt-functor $\ion^*\colon \Der(R)\to \Der(R/\In)$ sends the canonical maps~$\eY\otimes t\to t$ and~$t\to [\eY,t]$ to isomorphisms.
In particular, $\ion^*(\eY)\cong\unit$.
\end{Lem}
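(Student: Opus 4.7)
The plan is to reduce both assertions to a single key computation: $\ion^*(\fY)=0$ in $\Der(R/\In)$. Granting this, applying the tt-functor $\ion^*$ to the triangle $\eY\otimes t\to t\to \fY\otimes t$ obtained by tensoring~\eqref{eq:idemp-triangle} with $t$ yields cofiber $\ion^*(\fY)\otimes \ion^*(t)=0$, so $\ion^*(\eY\otimes t)\to \ion^*(t)$ is an isomorphism; specializing $t=\unit$ gives $\ion^*(\eY)\cong\unit$.

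To establish $\ion^*(\fY)=0$, I would first check that $\fY\otimes_R R/\In=0$ in $\Der(R)$. The support of $R/\In$ equals $V(\In)=V(s_1^n,\ldots,s_r^n)=V(s_1,\ldots,s_r)=Y$, so $R/\In\in\cT_Y$. By the recollement of \Cref{Rec:idempotents}, every object of $\cT_Y=\eY\otimes\cT$ has the form $\eY\otimes x$, and since $\fY\otimes\eY=0$, the functor $\fY\otimes-$ annihilates all of $\cT_Y$; in particular $\fY\otimes R/\In=0$. The projection formula then gives $(\ion)_*\ion^*(\fY)\cong \fY\otimes_R R/\In=0$ in $\Der(R)$, and since restriction-of-scalars $(\ion)_*$ is conservative, it follows that $\ion^*(\fY)=0$.

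For the map $t\to [\eY,t]$, applying $[-,t]$ to~\eqref{eq:idemp-triangle} produces a triangle $[\fY,t]\to t\to [\eY,t]$, so it suffices to show $\ion^*([\fY,t])=0$. The key observation is that $[\fY,t]$ lies in $\cT_Y^\perp$: for every $c\in\cT_Y$ the tensor-hom adjunction gives $\Homcat{T}(c,[\fY,t])\cong \Homcat{T}(\fY\otimes c,t)=0$ by the previous paragraph. Since the recollement identifies $\cT_Y^\perp=\fY\otimes\cT$, the object $[\fY,t]$ is $\fY$-local, that is $[\fY,t]\cong \fY\otimes [\fY,t]$. Applying $\ion^*$ and using $\ion^*(\fY)=0$ then yields $\ion^*([\fY,t])\cong \ion^*(\fY)\otimes \ion^*([\fY,t])=0$.

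The main obstacle is handling the internal hom $[\fY,t]$, since $\fY$ is generally not dualizable and $[\fY,t]$ does not split as a tensor product $\fY^\vee\otimes t$. This is resolved by the smashing structure of the recollement: recognizing $[\fY,t]$ as $\fY$-local in $\cT_Y^\perp=\fY\otimes\cT$ reduces the vanishing statement back to the already-established tensor computation $\ion^*(\fY)=0$.
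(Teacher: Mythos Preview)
Your proof is correct and follows a genuinely different route from the paper's. Both arguments begin by identifying the cones of the two maps as $\fY\otimes t$ and $[\fY,t]$, but they diverge in how they show these vanish under~$\ion^*$. The paper tensors both cones with the Koszul complex~$\kn$: since $\kn\in\cT_Y^c$ is dualizable with support~$Y$, one has $\kn\otimes(\fY\otimes t)=0$ and $\kn\otimes[\fY,t]\cong[(\kn)^\vee\otimes\fY,t]=0$; then one observes that $\ion^*(\kn)$ contains~$\unit$ as a direct summand (each $s_i^n$ becomes zero in~$R/\In$), so $\ion^*$ of the cones must vanish. Your argument instead establishes $\ion^*(\fY)=0$ directly via the projection formula $(\ion)_*\ion^*(\fY)\cong\fY\otimes R/\In=0$ and conservativity of restriction-of-scalars, and then handles $[\fY,t]$ by recognizing it as $\fY$-local through the recollement identification $\cT_Y^\perp=\fY\otimes\cT$. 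Your approach is more structural and avoids any explicit computation with Koszul complexes; the paper's approach is a quick one-liner exploiting the specific object~$\kn$ already in play throughout the section. Both are perfectly natural in context.
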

\begin{proof}
The cones of these maps are~$\fY\otimes t$ and~$[\fY,t]$ and these objects are killed by the Koszul objects~$\kn$ since $\supp(\kn)=Y$ and since~$\kn$ is dualizable in~$\cT$. So it suffices to observe that~$\ion^*(\kn)\cong\oplus_{i=0}^r \unit^{{n\choose i}}$, so $\unit$ kills~$\ion^*(\fY\otimes t)$ and~$\ion^*([\fY,t])$.
\end{proof}

Recall from \Cref{Rec:Der-Y} the tt-category~$\Der_Y(R)\subseteq\Der(R)$ of complexes supported on~$Y$ and the tt-equivalent category $\Der(R)^\wedge_Y=(\Der_Y(R))^{\perp\perp}\subseteq\Der(R)$ of derived complete complexes along~$Y$.

\begin{Thm}
\label{Thm:dualizable-complete}%
Let $R$ be a commutative ring and let $I\subset R$ be a finitely generated ideal such that $R\cong\hat{R}$ is $I$-adically complete and let $Y=V(I)$.
\begin{enumerate}[\rm(a)]
\item
\label{it:dual-1}%
A complex $d\in\Der_Y(R)$ supported on~$Y$ is dualizable in the category~$\Der_Y(R)$ if and only if $d$ belongs to the thick subcategory generated by the unit~$\eY$.
\smallbreak
\item
\label{it:dual-2}%
A derived complete complex $d\in\Der(R)^{\wedge}_Y$ is dualizable in the category~$\Der(R)^{\wedge}_Y$ if and only if $d$ is a perfect complex.
\end{enumerate}
\end{Thm}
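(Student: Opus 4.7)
The plan is to prove part (a) first and deduce (b) as a formal consequence via the tt-equivalence $\Der_Y(R) \cong \Der(R)^{\wedge}_Y$ of \Cref{Rem:TY=hTY}. In (a), the ``if'' direction is immediate: $\eY$ is the tensor unit of $\Der_Y(R)$, hence dualizable, and dualizables are closed under thick operations. For the ``only if'' direction, let $d \in \Der_Y(R)$ be dualizable; the plan is to show that the complete companion object $\hat{d} := [\eY, d] \in \Der(R)^{\wedge}_Y$ is in fact a perfect $R$-complex. Once this is established, $\hat{d} \in \thick(\unit) = \Dperf(R)$, and transporting back gives $d = \eY \otimes \hat{d} \in \thick(\eY \otimes \unit) = \thick(\eY)$, as desired.

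The first step is to observe that $\ion^*\colon \Der(R) \to \Der(R/\In)$ restricts to a tt-functor on $\Der_Y(R)$ taking $\eY$ to $\unit$ (\Cref{Lem:i*e}), hence preserving dualizables; so $\ion^*(d) \cong \ion^*(\hat{d})$ is perfect in $\Der(R/\In)$ for every $n \ge 1$. Using $R \cong \hat{R}$, I next construct a perfect complex $c \in \Dperf(R)$ lifting $\iota_1^*(d) \in \Dperf(R/I)$: lift the finitely generated projectives appearing in a representative of $\iota_1^*(d)$ via \Cref{Prop:lift-proj}, then lift the differentials iteratively, with obstructions killed at each stage by the nilpotence of the kernels $\Inm/\In$. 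Then choose a morphism $\varphi\colon c \to \hat{d}$ in $\Der(R)$ realizing the fixed isomorphism $\iota_1^*(c) \cong \iota_1^*(d) \cong \iota_1^*(\hat{d})$, the rightmost identification coming from \Cref{Lem:i*e} applied to the unit map $d \to [\eY,d]$.

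It then remains to show $\varphi$ is an isomorphism, i.e., that $x := \cone(\varphi)$ vanishes. The cone $x$ is automatically derived $I$-complete, satisfies $\iota_1^*(x) = 0$ by construction of $\varphi$, and $\ion^*(x) \in \Dperf(R/\In)$ for every $n \ge 1$ since both $c$ and $\hat d$ enjoy this property. A derived Nakayama argument then yields $x = 0$: I would first use \Cref{Lem:[ed]} to control the boundedness of $\hat{d}$ (and hence of $x$), and then apply \Cref{Lem:3} to $x[-k]$ for every $k$ in the relevant range to deduce that every homology group of $x \cong [\eY, x]$ vanishes. Thus $\hat{d} \cong c$ is perfect, completing (a).

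Part (b) is then formal: the tt-equivalence $\Der_Y(R) \cong \Der(R)^{\wedge}_Y$ transports dualizables bijectively, so a dualizable $d \in \Der(R)^{\wedge}_Y$ satisfies $\eY \otimes d \in \thick(\eY)$ by (a), whence $d = [\eY, \eY \otimes d] \in \thick(\hat{\unit}_Y) = \thick(\unit) = \Dperf(R)$, using $\hat{\unit}_Y \cong \unit$ from \Cref{Prop:classical=>derived} (as $R \cong \hat{R}$); conversely, $\Dperf(R) \subseteq \Der(R)^{\wedge}_Y$ consists of dualizables since perfect complexes are dualizable in $\Der(R)$ and preserved by $[\eY,-]$. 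The main obstacle will be the lifting step together with the derived Nakayama vanishing: the obstruction-theoretic construction of $c$ and $\varphi$ must be executed cleanly, and one needs enough boundedness control on $\hat{d}$ from \Cref{Lem:[ed]} to iterate \Cref{Lem:3} across every homological shift of $x$.
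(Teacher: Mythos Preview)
Your high-level strategy matches the paper's: reduce (b) to (a) via the tt-equivalence of \Cref{Rem:TY=hTY}, observe that $\ion^*(d)\cong\ion^*(\hat d)$ is perfect for all~$n$ via \Cref{Lem:i*e}, use \Cref{Lem:[ed]} to bound $\hat d=[\eY,d]$, and ultimately invoke \Cref{Lem:3} as a derived Nakayama lemma. The deduction of~(b) from~(a) and the final Nakayama step (showing $\cone(\varphi)=0$ by shifting and iterating \Cref{Lem:3}) are both correct.

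The gap is in the construction of the perfect complex~$c$ and the comparison map~$\varphi\colon c\to\hat d$. Your justification that ``obstructions are killed at each stage by the nilpotence of the kernels~$\Inm/\In$'' is not correct: lifting a perfect complex through a square-zero (or nilpotent) extension has a genuine obstruction in~$\mathrm{Ext}^2$, and lifting a map has an obstruction in~$\mathrm{Ext}^1$; nilpotence of the kernel does not kill these. In our situation the obstructions do happen to vanish at each finite level---precisely because $\ion^*(\hat d)$ already furnishes a lift---but you do not invoke this, and even granting compatible lifts over every~$R/\In$ you still need a formal-glueing argument to produce a perfect~$c$ over $R=\lim_n R/\In$ together with a map to~$\hat d$. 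That is a nontrivial theorem (a form of Grothendieck existence for perfect complexes), not a consequence of nilpotence.

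The paper sidesteps all of this with an induction on the homological amplitude of~$\iota_1^*(d)$. Instead of lifting the whole complex, it lifts only the \emph{bottom} finitely generated projective $\bar P$ of a perfect model of~$\iota_1^*(d)$ to~$P$ over~$R$ (via \Cref{Prop:lift-proj}) and lifts the single degree-zero map $\bar P\to\bar t_0$ to~$P\to t_0$ (no obstruction: the source is projective and $t_0\onto\bar t_0$ is surjective). Setting $g=\eY\otimes f\colon\eY\otimes P\to d$ and taking its fiber~$d'$ yields another dualizable object whose image under~$\iota_1^*$ has amplitude one less, and \Cref{Lem:3} is used exactly once to guarantee that the amplitude cannot stay at~$-1$ unless~$d=0$. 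This one-term-at-a-time approach is precisely what makes the obstruction theory disappear; your all-at-once lifting would have to reproduce this content, and as written it does not.
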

\begin{proof}
We only prove~\eqref{it:dual-1}, as~\eqref{it:dual-2} will follow by the tt-equivalence $\cT_Y\cong\hTY$ of \Cref{Rem:TY=hTY} and the fact that the unit~$\unit_{\hTY}$ is $\hat{\unit}_Y\cong R$ by \Cref{Prop:classical=>derived} under the inclusion~$\hTY=\cT_Y^{\perp\perp}\into\cT=\Der(R)$.
Since dualizable objects always form a thick subcategory containing the unit, we only need to prove that if $d$ is a dualizable object in $\cat T_Y$ then $d \in \thick\langle \eY \rangle$ in $\cat T_Y$.
(Since $\cat T_Y \subset \cat T$ is a thick subcategory, this is the same as $\thick\langle \eY\rangle$ in $\cat T$.)
If~$d=0$ there is nothing to prove, so we could assume that~$d\neq 0$.
We are going to proceed by induction on the `homological amplitude' of the image~$\iota_{1}^*(d)$ of~$d$ in~$\Der(R/I)$ and we will construct an exact triangle in~$\cT_Y$
\begin{equation}
\label{eq:d'PB}%
d'\to \eY\otimes P \xto{g} d \to \Sigma d'
\end{equation}
where $P$ is a finitely-generated projective $R$-module and where either $d'=0$ or the `homological amplitude' of~$\iota_{1}^*(d')$ is less than that of~$d$.
For this `homological amplitude' to make sense, we need to know that $\iota_{1}^*(d)$ is perfect. In fact, we have more generally for the ideals~$\In$ of \Cref{Not:general-completion}:
\begin{equation}
\label{eq:i*(d)}%
\textit{for every $n\ge 1$ the complex $\ion^*(d)\in\Der(R/\In)$ is perfect.}
\end{equation}
To see this, note that the functor~$(\ion^*)\restr{{\cT_Y}}\colon \cT_Y\hook \cT=\Der(R)\to \Der(R/\In)$ sends the tensor product to the tensor product, since both the inclusion $\cT_Y\hook \cT$ and extension-of-scalars~$\ion^*$ do. However, the inclusion $\cT_Y\hook \cT$ does not preserve the unit ($\eY\not\mapsto \unit$). Luckily, the second functor~$\ion^*$ corrects this issue, by \Cref{Lem:i*e}. This proves~\eqref{eq:i*(d)} since a tensor functor preserves dualizable objects and the dualizable objects
in the derived category of a ring are just the perfect complexes.
The `homological amplitude' of~$\iota_{1}^*(d)$ is simply the length of the interval where its homology is non-zero, \ie the smallest number~$b-a$ where $\rmH_i(\iota_{1}^*(d))=0$ for all~$i>b$ and all~$i<a$. By convention, let us say that this amplitude is~$-1$ if all those groups are zero,
that is, when $\iota_{1}^*(d)=0$.
We shall see that this only happens for~$d=0$.

For our dualizable object~$d\in\cT_Y$, define the following object of~$\cT=\Der(R)$:
\[
t\coloneqq [\eY,d].
\]
Note that $[\eY,t]=t$ and $\eY\otimes t\cong d$ and in particular~$t\neq 0$ when~$d\neq 0$. By \Cref{Lem:i*e} again, this object~$t$ has the same image as~$d$ in~$\Der(R/\In)$ under~$\ion^*$:
\begin{equation}
\label{eq:i*t=i*d}%
\ion^*(t)\cong\ion^*(d)
\end{equation}
for every $n\ge1$. In particular, by~\eqref{eq:i*(d)} the object $\ion^*(t)$ is compact in~$\Der(R/\In)$:
\begin{equation}\label{eq:i*t-perfect}%
\qquad
\ion^*(t)\in\Dperf(R/\In)
\quadtext{for all~$n\ge1$.}
\end{equation}

On the other hand, it is a general fact in a tensor-triangulated category~$\cS$ with coproducts and cocontinuous tensor, like~$\cS=\cT_Y$, that if $k$ is compact and $d$ is dualizable then $k\otimes d$ is compact. This is immediate from~$\Homcat{S}(k\otimes d,-)\cong\Homcat{S}(k,d^\vee\otimes -)$.
In particular, in our case, we see that $\kos(s_1,\ldots,s_r)\otimes d$ is compact in~$\cT_Y$, hence in~$\cT$ since $(\cT_Y)^c=(\cT^c)_Y$ by construction of~$\cT_Y$.
We can therefore invoke \Cref{Lem:[ed]} for our~$d$ to conclude that~$t=[\eY,d]$ is right-bounded. Up to replacing~$d$ by a (de)suspension, we can assume that
\begin{equation}
\label{eq:t>0}%
t=[\eY,d]\textrm{ belongs to }\Der_{\ge0}(R),\textrm{ and }\rmH_0(t)\neq 0\textrm{ when }d\neq 0.
\end{equation}

By~\eqref{eq:i*t-perfect} and~\eqref{eq:t>0}, the object~$t$ satisfies the hypotheses of \Cref{Lem:3}, hence
\begin{equation}
\label{eq:H0t}%
\rmH_0(\iota_{1}^*t)=0\quadtext{if and only if}d=0.
\end{equation}
Indeed, the vanishing of~$\rmH_0(\iota_{1}^*t)\cong\rmH_0((R/I)\Lotimes t)$ implies $\rmH_0([\eY,t])=0$ by \Cref{Lem:3}. But $[\eY,t]=t$ and $\rmH_0([\eY,t])=\rmH_0(t)=0$ only happens for~$d=0$ by~\eqref{eq:t>0}. The converse in~\eqref{eq:H0t} is clear since $t$ was defined to be~$[\eY,d]$.

So let us proceed with constructing the exact triangle~\eqref{eq:d'PB}, when $d\neq 0$, that is, when $\rmH_0(t)\neq 0$.
The object~$t\in\Der_{\ge 0}(R)$ is represented by a complex of projective~$R$-modules (not necessarily finitely generated) as in the top row below
\begin{equation}
\label{eq:XXP}%
\vcenter{\xymatrix@R=1em{
t=
&& \cdots \ar[r]
& t_2 \ar[r]
& t_1 \ar[r]
& t_0 \ar[r]
& 0 \ar[r]
& 0 \cdots
\\
\bar{t}=
&& \cdots \ar[r]
& \bar{t}_2 \ar[r]
& \bar{t}_1 \ar[r]
& \bar{t}_0 \ar[r]
& 0 \ar[r]
& 0 \cdots
\\
&&&&& \bar{P} \ar[u]_-{\bar{f}}
}}
\end{equation}
and its image under~$\iota_{1}^*=(R/I)\Lotimes_R-$, displayed in the second row with the shortcut~$\bar{t}_i:=(R/I)\otimes_R t_i$, is a perfect complex by~\eqref{eq:i*t-perfect}. By~\eqref{eq:H0t} we know that $\bar{t}_1\to \bar{t}_0$ cannot be surjective, \ie the `homological amplitude' of~$\iota_{1}^*(d)\cong\iota_{1}^*(t)=\bar{t}$ is the biggest integer $b\ge 0$ such that~$\rmH_b(\bar{t})\neq 0$.
Since $\bar{t}$ is quasi-isomorphic to a bounded complex of finitely generated projective $R/I$-modules, that we can assume to live in non-negative degrees since~$\bar{t}\in\Der_{\ge 0}(R/I)$, there exists a finitely generated projective $R/I$-module~$\bar{P}$ as in the third row of~\eqref{eq:XXP} (simply the last entry of that perfect complex) with the property that the homotopy fiber of $\bar{P}\to \bar{t}$
\begin{equation}
\label{eq:t'Pt-bar}%
\bar{t}' \to \bar{P} \xto{\bar{f}} \bar{t} \to \Sigma\bar{t}'
\end{equation}
is a perfect complex~$\bar{t}'$ with amplitude~$b-1$. (Recall that amplitude~$-1$ means~$\bar{t}'=0$.)

By \Cref{Prop:lift-proj}, since $R$ is classically complete, we know that $\bar{P}\cong (R/I)\otimes_R P$ for some finitely generated projective $R$-module~$P$ and we can lift the map~$\bar{f}\colon \bar{P}\to \bar{t}_0$ to a map~$f\colon P\to t_0$ since $P$ is projective and~$t_0\onto\bar{t}_0$ is a surjection of $R$-modules.
Since $t$ is concentrated in nonnegative degrees, $f$ defines an actual morphism of complexes~$f\colon P\to t$ (in degree zero).
Define~$g:=\eY\otimes f\colon \eY\otimes P\to \eY\otimes t\cong d$ and complete it into an exact triangle as announced in~\eqref{eq:d'PB}. Note that $d'$ is again dualizable in~$\cT_Y$.
So it only remains to show that $\iota_{1}^*(d')$ has homological amplitude~$b-1$.
By \Cref{Lem:i*e} we know that the image of the triangle~\eqref{eq:d'PB} under the tt-functor~$\iota_{1}^*$ is isomorphic to the exact triangle~\eqref{eq:t'Pt-bar}.
In other words, $\iota_{1}^*(d')\simeq \bar{t}'$ which has amplitude one less than that of~$d$.
This finishes the proof.
\end{proof}

\begin{Cor}
\label{Cor:dualizable-complete}%
Suppose that $R\cong\hat{R}_{Y}$ is classically {\Ycomplete}.
Then the subcategory of dualizable objects inside the category $\Der(R)^{\wedge}_Y$ of {\Ycomplete} complexes coincides with that of perfect complexes $\Dperf(R)=\Der(R)^c$ and is tt-equivalent to the subcategory of dualizable objects in~$\Der_Y(R)$ via the following tt-equivalences:
\[
\xymatrix@C=2em@R=2em{
& \Dperf(R) \ar[ld]_-{\eY\otimes-}^-{\cong} \ar@{=}[rd]^-{!}
\\
(\Der_Y(R))^d \ar[rr]_-{\cong}^-{[\eY,-]}
&& (\Der(R)^{\wedge}_Y)^d.
}
\]
\end{Cor}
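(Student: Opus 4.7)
The plan is to deduce the corollary as a bookkeeping consequence of~\Cref{Thm:dualizable-complete}. There are three things to verify in the diagram: the vertical equality on the right (marked~`!'), the bottom tt-equivalence, and the left diagonal tt-equivalence. I would address them in this order.

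First, the equality $\Dperf(R)=(\Der(R)^\wedge_Y)^d$ follows in one direction from~\Cref{Thm:dualizable-complete}\eqref{it:dual-2}. For the converse, since $R\cong\hat{R}_Y$ is classically $Y$-complete, \Cref{Prop:classical=>derived} gives $\unit\cong\hat{\unit}_Y$ in~$\Der(R)$, so $\unit$ lies in~$\Der(R)^\wedge_Y$ and hence so does the thick subcategory $\Dperf(R)=\thick\langle\unit\rangle$; part~\eqref{it:dual-2} then upgrades every perfect complex to a dualizable object of~$\Der(R)^\wedge_Y$. The bottom tt-equivalence is then free: the tt-equivalence $[\eY,-]\colon \Der_Y(R)\isoto \Der(R)^\wedge_Y$ of~\Cref{Rem:TY=hTY} preserves dualizable objects and so restricts to the claimed equivalence $(\Der_Y(R))^d\isoto (\Der(R)^\wedge_Y)^d$.

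For the left diagonal $\eY\otimes-\colon \Dperf(R)\to (\Der_Y(R))^d$, essential surjectivity is immediate from~\Cref{Thm:dualizable-complete}\eqref{it:dual-1}: it identifies $(\Der_Y(R))^d$ with $\thick\langle \eY\rangle$, and the tt-functor $\eY\otimes-$ takes the thick generator~$\unit$ of~$\Dperf(R)$ to~$\eY$. To obtain full faithfulness and the commutativity of the triangle simultaneously, I would compute the composite $[\eY,-]\circ(\eY\otimes-)$ on~$\Dperf(R)$. For $c\in\Dperf(R)$ dualizable, the standard tt-identity
\[
[\eY,\eY\otimes c]\cong [\eY,\eY]\otimes c
\]
applies. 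Under the equivalence of~\Cref{Rem:TY=hTY}, the unit~$\eY$ of~$\Der_Y(R)$ corresponds to the unit~$\hat{\unit}_Y$ of~$\Der(R)^\wedge_Y$, so $[\eY,\eY]\cong \hat{\unit}_Y$, which in turn equals~$\unit$ by~\Cref{Prop:classical=>derived}. Hence the composite is naturally isomorphic to the identity on~$\Dperf(R)$, which both witnesses commutativity of the triangle and, combined with the already established bottom equivalence, forces $\eY\otimes-$ to be an equivalence.

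The main (and really only) subtlety is the identification $[\eY,\eY]\cong\hat{\unit}_Y\cong\unit$; granted the tt-equivalence $\cT_Y\cong\hTY$ of~\Cref{Rem:TY=hTY} carries units to units, this is immediate from the classical completeness of~$R$. Everything else is formal once~\Cref{Thm:dualizable-complete} is in hand.
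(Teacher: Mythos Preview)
Your proposal is correct and follows essentially the same route as the paper: the right-hand equality from \Cref{Thm:dualizable-complete}\eqref{it:dual-2}, the bottom equivalence from the tt-equivalence of \Cref{Rem:TY=hTY} restricted to dualizables, and the identification of the composite $[\eY,\eY\otimes c]\cong[\eY,\eY]\otimes c\cong\hat{\unit}_Y\otimes c\cong c$ via \Cref{Prop:classical=>derived}, from which the left diagonal equivalence and the commutativity of the triangle follow by two-out-of-three. Your additional remarks (the converse inclusion $\Dperf(R)\subseteq(\Der(R)^\wedge_Y)^d$ and essential surjectivity via part~\eqref{it:dual-1}) are correct but already subsumed by the ``if and only if'' in \Cref{Thm:dualizable-complete}\eqref{it:dual-2} and the two-out-of-three argument, respectively.
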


\begin{proof}
The right-hand equality is \Cref{Thm:dualizable-complete}\,\eqref{it:dual-2}.
The horizontal equivalence is simply the dualizable part of the tt-equivalence $\cT_Y\cong(\cT_Y)^{\perp\perp}=\hTY$ of \Cref{Rem:TY=hTY}.
The left-hand tensor functor $\eY \otimes -:\cat T \to \cat T_Y$ preserves dualizable objects
hence restricts to a functor $\cat T^c=\cat T^d\to (\cat T_Y)^d$.
The composite maps $c\in\cT^c$ to $[\eY,\eY\otimes c]\cong[\eY,\eY]\otimes c=\hat{\unit}_Y\otimes c$ which is simply~$c$ in our case since $\hat{\unit}_Y\cong \unit$ by \Cref{Prop:classical=>derived}.
\end{proof}


\section{Main results}
\label{sec:main}%


We keep our general notation: $R$ is a commutative ring and $Y\subseteq \Spec(R)$ is a closed subset with quasi-compact complement.
We write~$\cT=\Der(R)$ for the derived category, $\cat T_Y=\Der_Y(R)$ for the category of complexes supported on~$Y$ and~$\hTY=\Der(R)^{\wedge}_Y$ for the category of derived complete complexes along~$Y$ (\Cref{Rec:Der-Y}).

\smallskip
We are ready to prove our main result.
\begin{Thm}
\label{Thm:main}%
Let $\underline{s}=(s_1,\ldots,s_r)$ be a sequence of elements of~$R$ such that $Y=V(s_1,\ldots,s_n)$.
Then the following are equivalent:
\begin{enumerate}[\rm(i)]
\item
\label{it:main-1}%
The sequence~$\underline{s}$ is Koszul-complete (\cref{Def:I-kos}).
\smallbreak
\item
\label{it:main-2}%
There is a canonical tt-equivalence $(\DYR)^d \cong \Dperf(\hat{R}_{Y})$ making the following diagram commute:
\[\vcenter{
\xymatrix@R=1em{
& \Dperf(R) \ar@/_1em/[ld]_-{\eY\otimes-} \ar@/^1em/[rd]^-{\hat{R}_{Y}\otimes_R-}
\\
(\Der_Y(R))^d \ar[rr]^-{\cong}
&& \Dperf(\hat{R}_{Y}).
}}
\]
\item
\label{it:main-3}%
There is a canonical tt-equivalence $(\Der(R)^{\wedge}_{Y})^d \cong \Dperf(\hat{R}_{Y})$ making the following diagram commute:
\[\vcenter{
\xymatrix@R=1em{
& \Dperf(R) \ar@/_1em/[ld]_-{(-)^\wedge_Y} \ar@/^1em/[rd]^-{\hat{R}_{Y}\otimes_R-}
\\
(\Der(R)^{\wedge}_Y)^d \ar[rr]^-{\cong}
&& \Dperf(\hat{R}_{Y}).
}}
\]
\smallbreak
\item
\label{it:main-4}%
There exists an isomorphism of ring objects~$\hat{\unit}_Y\cong\hat{R}_{Y}$ in~$\Der(R)$.
\end{enumerate}
\end{Thm}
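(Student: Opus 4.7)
The plan is to establish the four-way equivalence by running the cycle (i) $\Leftrightarrow$ (iv) $\Rightarrow$ (iii) $\Leftrightarrow$ (ii) $\Rightarrow$ (iv), leaning on \Cref{Thm:D(R)-completion} and \Cref{Cor:dualizable-complete}.

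The equivalence (i) $\Leftrightarrow$ (iv) is immediate from \Cref{Thm:D(R)-completion}: condition (iv) here is identical to condition~\eqref{it:D(R)-compl-1} there (since $\hat{\unit}_Y = [\eY,\unit]$), which the theorem equates with Koszul-completeness. Likewise, (ii) $\Leftrightarrow$ (iii) follows by transport along the Dwyer--Greenlees tt-equivalence $[\eY,-]\colon \Der_Y(R) \xrightarrow{\sim} \hTY$ of \Cref{Rem:TY=hTY}: restricted to dualizable parts it yields $(\Der_Y(R))^d \cong (\hTY)^d$, and it carries the functor $\eY \otimes -$ from $\Dperf(R)$ to $(-)^\wedge_Y$ via the dualizability identity $[\eY,\eY\otimes c] \cong [\eY,\eY]\otimes c \cong [\eY,c]$ for $c\in\Dperf(R)$, so the two triangles translate into one another.

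For (iv) $\Rightarrow$ (iii): \Cref{Thm:D(R)-completion} turns condition (iv) into a tt-equivalence $\hat{f}^*\colon \hTY \xrightarrow{\sim} \Der(\hat{R})^{\wedge}_{Y'}$. Restricting to dualizable parts and invoking \Cref{Cor:dualizable-complete} for the classically $Y'$-complete ring $\hat{R}$ (see \Cref{Rem:fg-comp-is-comp}) identifies $(\Der(\hat{R})^{\wedge}_{Y'})^d$ with $\Dperf(\hat{R})$, yielding the required tt-equivalence $(\hTY)^d \cong \Dperf(\hat{R})$. Commutativity of the triangle follows by a direct chase of $c \in \Dperf(R)$ through the definitions.

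The main obstacle is closing the cycle with (iii) $\Rightarrow$ (iv), which I plan to handle by a Yoneda argument powered by two adjunctions. Write $\phi\colon (\hTY)^d \xrightarrow{\sim} \Dperf(\hat{R})$ for the given tt-equivalence, so commutativity supplies a natural isomorphism $\phi([\eY,c]) \cong \hat{R} \otimes_R c$ for $c \in \Dperf(R)$. For any $d \in (\hTY)^d$ and $c \in \Dperf(R)$ I compute
\[
\Hom_{\Der(R)}(c,d) \cong \Hom_{\hTY}([\eY,c],d) \cong \Hom_{\Dperf(\hat{R})}\bigl(\hat{R} \otimes_R c,\phi(d)\bigr) \cong \Hom_{\Der(R)}\bigl(c,f_*\phi(d)\bigr),
\]
using respectively the adjunction $(-)^\wedge_Y \adj \textrm{incl}$, the equivalence $\phi$ together with the commutativity, and the adjunction $f^* \adj f_*$. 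These isomorphisms are natural in both $c$ and $d$, and compact generation of $\Der(R)$ by $\Dperf(R)$ extends the natural isomorphism to all of $\Der(R)^{\opname}$; Yoneda then forces an isomorphism $d \cong f_*\phi(d)$ in $\Der(R)$, natural in $d$. Evaluating at $d = \hat{\unit}_Y$, where commutativity of the triangle at $R$ gives $\phi(\hat{\unit}_Y) \cong \hat{R}$, produces the desired isomorphism $\hat{\unit}_Y \cong f_*(\hat{R}) = \hat{R}_Y$ in $\Der(R)$; it respects the ring-object structure since $\phi$ is a tt-functor and $f_*$ is lax-monoidal.
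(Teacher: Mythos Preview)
Your argument tracks the paper's proof closely: (i)$\Leftrightarrow$(iv) and (ii)$\Leftrightarrow$(iii) are handled the same way, and your (iv)$\Rightarrow$(iii) is the paper's (i)$\Rightarrow$(ii) transported to the complete side. The only substantive difference is how you close the cycle, via (iii)$\Rightarrow$(iv) rather than the paper's (ii)$\Rightarrow$(i), and there your Yoneda step has a gap.

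From the natural isomorphism $\Hom_{\cT}(c,d)\cong\Hom_{\cT}(c,f_*\phi(d))$ for $c\in\Dperf(R)$ you cannot directly conclude $d\cong f_*\phi(d)$ in $\cT=\Der(R)$. The restricted Yoneda embedding of~$\cT$ into presheaves on~$\cT^c$ is conservative but not full in general, so an isomorphism between two restricted representable functors need not come from a morphism in~$\cT$; your sentence ``compact generation \ldots\ extends the natural isomorphism to all of $\Der(R)^{\opname}$'' is exactly the unjustified step. To invoke Yoneda one must first exhibit an actual morphism $d\to f_*\phi(d)$ in~$\cT$ inducing the given isomorphisms, after which conservativity (take the cone) finishes the job.

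The paper supplies such a morphism by going through (ii)$\Rightarrow$(i) instead. The commutative triangle in~(ii) forces the composite $\cT_Y^c\hookrightarrow(\cT_Y)^d\xrightarrow{\simeq}\Dperf(\hat R)$ to agree with~$f^*|_{\cT_Y^c}$, so $f^*$ is fully faithful on~$\cT_Y^c$. For $b\in\cT_Y^c$ this says that $\Hom_\cT(a,\eta_b)$ is an isomorphism for every $a\in\cT_Y^c$, where $\eta_b\colon b\to f_*f^*(b)\cong f_*(\unit)\otimes b$ is the \emph{explicit} unit of the adjunction $f^*\adj f_*$; since both $b$ and $f_*(\unit)\otimes b$ lie in $\cT_Y=\Loc(\cT_Y^c)$, this forces $\eta_b$ itself to be an isomorphism. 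Taking $b=\kos(\underline{s})$ gives~(i). You can repair your argument along the same lines, or alternatively exhibit the map $\hat{\unit}_Y\to\hat R$ obtained as the $Y$-completion of $R\to\hat R$ (using that $\hat R$ is derived $Y$-complete) and check that it realises your chain of isomorphisms.
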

\begin{proof}
Let $f:\Spec(\hat{R}_{{}})\to\Spec(R)$ be the ring-theoretic completion
and let $f^*:\cat T\coloneqq \Der(R) \to \Der(\hat{R}_{{}}) \eqqcolon \cat S$ be extension-of-scalars.
Write $Y'=f\inv(Y)$ as usual.

\eqref{it:main-1}$\then$\eqref{it:main-2}:
We have a commutative diagram of tt-functors
\[
\xymatrix@C=4em{
\cat T \ar[r]^-{f^*} \ar[d]_-{\eY\otimes-}
& \cat S \ar[d]^-{\ee_{Y'}\otimes-}
\\
\cat T_Y \ar[r]^-{f^*\restr{\cT_Y}}
& \cat S_{Y'}.
}\]
The bottom functor is an equivalence by \cref{Thm:D(R)-completion}. On the other hand, since $\hat{R}_{{}}$ is classically $Y'$-complete (\cref{Rem:fg-comp-is-comp}),  \cref{Cor:dualizable-complete} implies that the right vertical functor is an equivalence when restricted to dualizable objects. We get
\[
\xymatrix{
\cat T^c \ar[r]^-{f^*} \ar[d]_-{\eY\otimes-}
& \cat S^c \ar[d]^-{\cong}
\\
(\cat T_Y)^d \ar[r]^-{\cong}
& (\cat S_{f^{-1}(Y)})^d.
}\]

\eqref{it:main-2}$\then$\eqref{it:main-1}:
The hypothesis is that we have a tt-equivalence making the right-hand triangle below commute.
\[
\xymatrix@C=4em{
& \cat T^c \ar[d]_-{\eY\otimes-} \ar@/^1em/[dr]^-{f^*|_{\cat T^c}}
\\
\cat T_Y^c \ \vphantom{I^I}\ar@{^(->}[r]^-{\incl} \ar@/^1em/@{^(->}[ur]^-{\incl}
& (\cat T_Y)^d \ar[r]^-{\cong} & \cat S^c 	
}
\]
The left-hand triangle commutes since~$\eY\otimes-$ is right adjoint to the inclusion~$\cT_Y\into\cT$ and in particular retracts it.
The bottom composite is fully faithful, hence for any $a,b \in \cat T_Y^c$, we have that $\cat T(a,b) \to \cat S(f^*(a),f^*(b)) \cong \cat T(a,f_*(\unit)\otimes b)$ is a bijection.
Since this is true for all $a \in \cat T_Y^c$ it is also true for all $a \in \Loc(\cat T_Y^c)=\cat T_Y$ and the latter contains~$b$ and~$f_*(\unit)\otimes b$.
It follows by Yoneda that $\eta_b\colon b \to f_*(\unit)\otimes b$ is an isomorphism for every $b \in \cat T_Y^c$.
Plugging in $b=\kos(\underline{s})$ we get~\eqref{it:main-1}.

The equivalence~\eqref{it:main-1}$\Leftrightarrow$\eqref{it:main-4} was already established in \Cref{Thm:D(R)-completion}.
The equivalence~\eqref{it:main-2}$\Leftrightarrow$\eqref{it:main-3} follows again from the tt-equivalence~$\cT_Y\cong\cT^\wedge_Y$ of \Cref{Rem:TY=hTY}.
\end{proof}

\begin{Cor}
\label{Cor:main-noeth}%
Let $R$ be a noetherian commutative ring and let $Y\subseteq\Spec(R)$ be a closed subset. There are canonical equivalences of tt-categories between $\smash{\Dperf(\hat{R}_{{}})}$ and the subcategories~$(\DYR)^d$ and~$(\Der(R)^\wedge_Y)^d$ making the following diagram commute
\[\vcenter{
\xymatrix@R=1em{
&& \Dperf(R) \ar@/_1em/[ld]^-{(-)^\wedge_Y} \ar@/_1em/[lld]_-{\eY\otimes-} \ar@/^1em/[rd]^-{\hat{R}_{{}}\otimes_R-}
\\
(\DYR)^d \kern-.5em \ar@{}[r]|-{\displaystyle\cong}
& \kern-.5em (\Der(R)^\wedge_Y)^d \ar[rr]^-{\cong}
&& \Dperf(\hat{R}_{{}}).
}}
\]
\end{Cor}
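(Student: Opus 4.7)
The plan is to deduce this statement as a direct consequence of \Cref{Thm:main} combined with \Cref{prop:noeth-koszul}. Since $R$ is noetherian, the closed subset $Y\subseteq\Spec(R)$ automatically has the form $Y=V(I)$ for a finitely generated ideal~$I$, and the complement of~$Y$ is quasi-compact; thus the setup of \Cref{Thm:main} applies once we pick any generating sequence $\underline{s}=(s_1,\ldots,s_r)$ for~$I$.

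First I would invoke \Cref{prop:noeth-koszul} to conclude that $\underline{s}$ is Koszul-complete. This verifies hypothesis~\eqref{it:main-1} of \Cref{Thm:main}, so the theorem produces tt-equivalences $(\DYR)^d\cong\Dperf(\hat{R}_{{}})$ and $(\Der(R)^\wedge_Y)^d\cong\Dperf(\hat{R}_{{}})$ that are compatible with the functors $\eY\otimes-$ and $(-)^\wedge_Y$ emanating from $\Dperf(R)$, together with extension-of-scalars $\hat{R}_{{}}\otimes_R-$ on the other side.

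It then remains to assemble these two tt-equivalences into the single commutative diagram of the corollary. The middle tt-equivalence $(\DYR)^d\cong(\Der(R)^\wedge_Y)^d$ is the restriction to dualizable objects of the Dwyer--Greenlees equivalence $\cT_Y\cong\hTY$ recalled in \Cref{Rem:TY=hTY}, and commutativity of the resulting diagram with $\Dperf(R)$ as apex reduces to the canonical isomorphism $[\eY,\eY\otimes c]\cong[\eY,c]$ for $c\in\Dperf(R)$, reflecting the fact that $(-)^\wedge_Y$ factors through~$\eY\otimes-$ as in~\eqref{eq:tt-completion}. Since every ingredient is already in place, I do not foresee any obstacle: the corollary is essentially a repackaging of \Cref{Thm:main} using the noetherian input from \Cref{prop:noeth-koszul}.
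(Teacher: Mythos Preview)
Your proposal is correct and follows essentially the same approach as the paper: the paper's proof is the single line ``Immediate from \Cref{Thm:main} and \Cref{prop:noeth-koszul} (and \Cref{Rem:TY=hTY}),'' and you have simply unpacked these three ingredients in the expected way.
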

\begin{proof}
Immediate from \Cref{Thm:main} and \Cref{prop:noeth-koszul} (and \Cref{Rem:TY=hTY}).
\end{proof}

\begin{Rem}
\label{Rem:no}%
In view of \Cref{Thm:main} and \Cref{Exa:no} the equivalence of \Cref{Cor:main-noeth} does not hold for the non-noetherian ring $R=\bbZ_{(p)}\oplus (\bbQ/\bbZ_{(p)})$ and $Y=V(p)$.
\end{Rem}

\begin{Rem}
\label{Rem:final}%
We learned in \Cref{Exa:trivial-Der} that $\hTY$ should not be thought of as the tt-analogue of ring-completion. \Cref{Thm:tt-compl} and \Cref{Thm:D(R)-completion} suggest that~$\hTY$ only recovers the part of {\Ycompletion} \emph{supported on~$Y$}. Perhaps one should write~$\cT^{\wedge}_{YY}$ to make this point: One decoration~${}^\wedge_Y$ for {\Ycompletion} and another~$Y$ for support.

We could also say that the only thing we need to build the full completion~$\hat{\cT}$ is a rigid tt-category~$\hat{\cT}^d$ that will serve as the dualizable objects in~$\hat\cT$ and that we can Ind-complete into~$\hat\cT$. \Cref{Thm:main} suggests that the dualizable objects in~$\hTY$ is a possible choice for this~$\hat\cT^c$.
This is the definition chosen in~\cite{NaumannPolRamzi24}.
Another, possibly smaller, choice would be to take the smallest tt-subcategory of~$(\hTY)^d$ that contains the image of~$\cT^c$.
It is not clear if there is a difference in general between these two choices and
\Cref{Thm:dualizable-complete} tells us that there is none in the case of~$\Der(R)$.
Future investigation of this topic seems worth pursuing in general tt-geometry.
\end{Rem}


\end{document}